\newcommand{\Z}{{\mathbb{Z}}}
\newtheorem{theorem}{Theorem}[section]
\newtheorem{lemma}[theorem]{Lemma}
\theoremstyle{definition}
\newtheorem{definition}[theorem]{Definition}
\newtheorem{proposition}[theorem]{Proposition}
\newtheorem{corollary}[theorem]{Corollary}
\newtheorem{example}[theorem]{Example}
\theoremstyle{remark}
\newtheorem{remark}[theorem]{Remark}
\numberwithin{equation}{section}
\begin{document}

\title{Involutions of alternating links} 

\author{Keegan Boyle}  

\email{kboyle@math.ubc.ca}

\subjclass[2020]{Primary 57M25}

\commby{David Futer}

\setcounter{section}{0}

\begin{abstract}
Let $L$ be an alternating prime non-split link in $S^3$. We study compositions of flypes between reduced alternating diagrams for $L$ to classify involutions on $L$. As consequences, we show that when $L$ is 2-periodic its quotient is alternating and that if $L$ is freely 2-periodic then it has an even number of components.
\end{abstract}
\maketitle
\section{Introduction}
A diagram for a link in $S^3$ is \emph{alternating} if the crossings alternate over and under following the strands of the diagram (see Figure \ref{fig:rotational} for an example), and a link $L$ is \emph{alternating} if it has such a diagram. The purpose of this paper is to give a diagrammatic classification of involutions on alternating links using flypes. For us, an involution $\tau$ is a smooth orientation-preserving $\Z/2$ action on $S^3$ which preserves $L$ set-wise. By a classical result of Smith \cite{smith}, $\tau$ either has an empty fixed-point set, in which case it is conjugate to the antipodal action, or a fixed set $S^1$. If $\tau$ has fixed set $S^1$, then the Smith conjecture, proved in \cite{MB}, implies that $\tau$ is conjugate to rotation around an unknotted axis. Thus involutions on links are generally classified into the following three categories.
\begin{enumerate}
\item When $\tau$ is conjugate to a rotation around an axis disjoint from $L$, we say that $L$ is 2\emph{-periodic}. In this case, the quotient of $L$ is another link in $S^3$. 
\item When $\tau$ is conjugate to a rotation around an axis which intersects $L$, the quotient is no longer a link - the intersection points become the endpoints of arcs in the quotient. If $L$ is a knot we say that $L$ is $\emph{strongly invertible}$.
\item Finally, when $\tau$ is conjugate to the antipodal action on $S^3$, we say that $L$ is \emph{freely} 2\emph{-periodic}. In this case the quotient is a link in $\mathbb{R}P^3$.
\end{enumerate}
We will use \emph{periodic} without qualification to refer only to the first case, and will specify \emph{freely periodic} to refer to the third case. 

The main result of this paper is Theorem \ref{thm:main}, which will allow us to construct all involutions on alternating links by beginning with one of three basic types of involutions on a diagram (see Section \ref{sec:basic}) and performing a nested or disjoint sequence of the replacements shown in Figures \ref{fig:transform1} and \ref{fig:transform2}. If the link is periodic, such a diagram then allows us to construct an alternating diagram for the quotient.

This theorem was foreshadowed by Menasco and Thistlethwaite's work in \cite{MT}; in \cite{HTW} they state: ``any symmetry of a prime alternating link must be visible, up to flypes, in any alternating diagram of the link". However, there is still a meaningful reduction in showing exactly which sequences of flypes are possible in a finite order symmetry. This is demonstrated, for example, by a conjecture in \cite{JN} that if an alternating knot $K$ has crossing number $c$ and odd prime period $q$, then $c|q$. This conjecture was proved only recently by Costa and Hongler \cite{CH} and independently by the author \cite{B}. The corresponding statement for 2-periods is false (consider the trefoil which is 2-periodic but has crossing number 3), but we are nonetheless able to prove the related result that the quotient of an alternating non-split 2-periodic link is alternating (see Corollary \ref{cor:quotient}). The underlying reason for the additional complication in the case of involutions is the existence of a diagrammatic involution for a period which reverses the orientation of the projection sphere (see Figure \ref{fig:reflective}).

Combining our results with \cite{B} or \cite{CH} we then have the following theorem (see Definition \ref{def:symmetry} for a precise definition of a period on $L$). 

\begin{theorem}
\label{thm:quotient}
The quotient of an alternating non-split periodic link $L$ is alternating.
\end{theorem}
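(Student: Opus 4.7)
The plan is to induct on the order of the periodic action. Let $\tau$ generate the cyclic group $\Z/n$ acting on $(S^3,L)$ as a periodic symmetry, so $\tau$ is conjugate to rotation by $2\pi/n$ around an unknotted axis $A$ disjoint from $L$. The base case $n=1$ is trivial, so I would assume the result for all smaller orders.

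For the inductive step, fix a prime $p$ dividing $n$ and set $m = n/p$. The subgroup $\langle \tau^m\rangle \cong \Z/p$ is itself a periodic symmetry with the same rotation axis $A$, so the quotient $\bar L := L/\tau^m$ is a link in $S^3/\tau^m \cong S^3$, and the residual action of $\tau$ gives $\bar L$ the structure of a $\Z/m$-periodic link. If I can show that $\bar L$ is non-split alternating, then the inductive hypothesis applies to $\bar L$ and gives that $L/\tau = \bar L/\tau$ is alternating, completing the induction. Thus the problem reduces to the case where $\tau$ has prime order $p$.

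For the prime case, the plan is to split into two subcases. If $p = 2$, this is exactly Corollary \ref{cor:quotient}, obtained in the paper as a consequence of the classification Theorem \ref{thm:main}. If $p$ is an odd prime, I would invoke the theorem of \cite{B} and \cite{CH} on the structure of alternating knots with odd prime period, which produces a reduced alternating diagram for $L$ that is literally invariant under a rotation realizing the symmetry, with the rotation axis meeting the projection sphere transversely in two points away from any crossing. The quotient of such an equivariant alternating diagram is then visibly a reduced alternating diagram for $L/\tau$.

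The main obstacle is twofold. First, the inductive reduction requires that the intermediate quotient $\bar L$ be non-split; one would argue this directly from the equivariant alternating diagram, combined with Menasco's theorem (a reduced alternating diagram is split iff it is visibly split), to rule out hidden splittings after quotienting. Second, the results \cite{B,CH} are stated for alternating knots, so extending them to alternating links is a prerequisite; I expect that the arguments there adapt without essential change because the input they use is the rigidity of reduced alternating diagrams under flypes, which holds equally for links by Menasco--Thistlethwaite \cite{MT}. Ensuring these two points are watertight seems to be the only substantive work beyond the pieces already assembled in the paper.
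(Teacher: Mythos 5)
Your overall strategy matches the paper's: reduce to the prime-order case by induction on the order of the period (the paper inducts on the number of prime factors of $p$, quotienting by a $\Z/q$ subgroup first and then applying the prime case to the residual action, but this is the same decomposition as yours run in the opposite order), and then handle $p=2$ via Corollary \ref{cor:quotient} and odd primes via \cite{B} or \cite{CH}. However, there is one genuine gap: Theorem \ref{thm:quotient} assumes only that $L$ is non-split, while Corollary \ref{cor:quotient} (and the odd-prime results you invoke) require $L$ to be \emph{prime}. Your argument as written only proves the theorem for prime links. The paper closes this gap as its first step by citing \cite{composite}: a $p$-period on an alternating non-split link respects the decomposition into prime factors, so on each prime factor $C$ the symmetry either restricts to a $p$-period (handled by the prime case) or freely permutes $p$ isomorphic copies of $C$ (whose quotient is just $C$), and the quotient of $L$ is the connected sum of these pieces, hence alternating. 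Without some such reduction your proof does not cover the stated generality.

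The two obstacles you do flag are reasonable but are not where the paper spends its effort: non-splitness of the intermediate quotient is asserted as part of the inductive hypothesis (a splitting sphere for the quotient would lift to an equivariant splitting of $L$), and the odd-prime input is cited directly from \cite[Corollary 1.3]{B} and \cite{CH} without further argument. So the substantive missing ingredient is the prime-decomposition step, not the issues you identified.
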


For an application of this theorem consider \cite[Theorem 6]{B2}, which gives a relationship between the Alexander polynomial of a 2-periodic alternating knot and its quotient using a spectral sequence on knot Floer homology developed by Hendricks \cite{H} and refined by Hendricks, Lipshitz, and Sarkar \cite{HLS}. Theorem \ref{thm:quotient} strengthens \cite[Theorem 6]{B2} by removing the assumption that the quotient knot is alternating:

\begin{theorem} (a slight improvement of \cite[Theorem 6]{B2})
Let $\widetilde{K}$ be an alternating 2-periodic knot with quotient $K$ which has linking number $\lambda$ with the axis of symmetry. Consider the symmetrized Alexander polynomials
\[
\Delta_{\widetilde{K}}(t) = \widetilde{a}_0 + \sum_{\widetilde{s} > 0} \widetilde{a}_{\widetilde{s}} (t^{\widetilde{s}} + t^{-\widetilde{s}}), \mbox{ and } \Delta_{K(t)} = a_0 + \sum_{s > 0} a_s (t^s + t^{-s}).
\]
Then for each $s$,
\[
|\widetilde{a}_{2s + \frac{\lambda -1}{2}} - \widetilde{a}_{2s + \frac{\lambda +1}{2}}| \geq a_s.
\]
\end{theorem}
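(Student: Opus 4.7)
The plan is to reduce the statement to \cite[Theorem 6]{B2} by using Theorem \ref{thm:quotient} to remove the additional hypothesis there that the quotient $K$ is alternating. The original result of \cite{B2} requires both $\widetilde{K}$ and $K$ to be alternating; here we assume only that $\widetilde{K}$ is alternating, so the only thing to supply is the alternating property of $K$.

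First I would check that Theorem \ref{thm:quotient} applies. Since $\widetilde{K}$ is a knot it is automatically non-split as a link, and by hypothesis it is alternating and carries a 2-periodic involution $\tau$ in the sense of case (1) of the classification given in the introduction. Theorem \ref{thm:quotient} then asserts that the quotient link is alternating. Because the axis of a 2-periodic involution is, by definition, disjoint from the link, the quotient of the knot $\widetilde{K}$ is again a knot, so $K$ is an alternating knot. The linking number $\lambda$ between $\widetilde{K}$ and the axis is well defined in exactly the sense used in \cite{B2}.

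With $K$ now known to be alternating, the full hypotheses of \cite[Theorem 6]{B2} are met, and its conclusion yields the inequality
\[
|\widetilde{a}_{2s + \tfrac{\lambda - 1}{2}} - \widetilde{a}_{2s + \tfrac{\lambda + 1}{2}}| \geq a_s
\]
for every $s$ directly. No new Heegaard Floer input (beyond \cite{H,HLS} already cited in \cite{B2}) is required, because once both $\widetilde{K}$ and $K$ are alternating, the knot Floer homology of each is thin and pinned down by the Alexander polynomial, which is precisely what the argument of \cite[Theorem 6]{B2} exploits.

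The only point worth verifying carefully is terminological compatibility, namely that the notion of 2-periodic used in \cite{B2} agrees with Definition \ref{def:symmetry} (smooth orientation-preserving $\Z/2$-action on $S^3$ with axis disjoint from the link), which is immediate from the introduction. Since the entire novelty has been absorbed into Theorem \ref{thm:quotient}, there is no substantive obstacle at this step; the real work is in the diagrammatic classification underlying that theorem, not in the present deduction.
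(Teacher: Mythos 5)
Your proposal is correct and matches the paper's intended argument exactly: the paper presents this theorem as an immediate consequence of Theorem \ref{thm:quotient} (which supplies the missing hypothesis that the quotient $K$ is alternating) combined with \cite[Theorem 6]{B2}, which is precisely your reduction. The observations that a knot is automatically non-split and that the quotient of a knot under a period disjoint from the link is again a knot are the right details to check, and no further argument is needed.
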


As an additional application of our classification, we will see that a freely 2-periodic prime non-split alternating link must have an even number of components (see Corollary \ref{cor:free}). Specifically, no alternating prime knots are freely 2-periodic. It is interesting to compare this to a result of Sakuma \cite[Main Theorem]{S} that amphicheiral hyperbolic knots cannot be freely periodic, and a recent construction by Paoluzzi and Sakuma \cite{PS} of prime amphicheiral (non-hyperbolic) freely 2-periodic knots. 

The main technical tool used in this paper is Menasco and Thistlethwaite's theorem that any self-homeomorphism of pairs $(S^3,L) \to (S^3,L)$ is isotopic to a composition of flypes \cite[Main Theorem]{MT}, see Theorem \ref{thm:mt}.

\subsection{Organization}
Section \ref{sec:basic} defines some elementary involutions on alternating diagrams. Section \ref{sec:flype} gives some background on flypes, including some results in the particular situation of involutions. Section \ref{sec:main} gives a precise statement and proof of the classification as well as some interesting corollaries.

\subsection{Acknowledgments}
I would like to thank Liam Watson, Robert Lipshitz, and Ahmad Issa for helpful comments and conversations. I would also like to thank the referee for many comments which improved the clarity of the paper. 

\section{Basic Diagrammatic Involutions \label{sec:basic}}
In this section, we define three basic involutions which are visible in a link diagram. These basic involutions are the building blocks for the main classification. Throughout this paper, all links will be prime, alternating, and non-split. To begin, we specify our exact meaning of a symmetry of $L$. 

\begin{definition}
\label{def:symmetry}
Given a link $L$, consider the group of self-diffeomorphisms of pairs $(S^3,L)$ which are required to be orientation-preserving on $S^3$, but not on $L$. The \emph{symmetry group} of $L$ is the quotient of this group by the (normal) subgroup of diffeomorphisms which are isotopic to the identity. 

A \emph{symmetry} of a link $L$ is a finite order element of the symmetry group. That is, it is a finite order self-diffeomorphism of $S^3$ fixing $L$ set-wise and not isotopic to the identity. An \emph{involution} of $L$ is an order 2 symmetry. A \emph{period} of $L$ is a symmetry with fixed set an unknot disjoint from $L$. A \emph{strong inversion} of $L$ is an involution with fixed set an unknot intersecting each component of $L$ in two points. A \emph{free period} of $L$ is a symmetry which acts freely on $S^3$.
\end{definition}

We note that this definition disagrees slightly with the more general notion of symmetry which allows diffeomorphisms which are isotopic to the identity. Specifically, the torus links $T(p,q)$ have a periodic symmetry of order $n$ if $n|p$ or $n|q$, but this symmetry is trivial in the symmetry group defined above. Our main conclusions, Theorem \ref{thm:quotient} and Corollary \ref{cor:free}, hold in this case as well, however. Indeed, the only alternating torus links are $T(2,n)$ for which the symmetries are well understood. On the other hand for non-torus knots $K$, \cite[Theorem 10.6.6]{Kaw} implies that no finite order $(>1)$ self-diffeomorphism of $S^3$ fixing $K$ set-wise can be isotopic to the identity; in this case these notions of symmetry agree.

\begin{definition}
Given a link $L \subset S^3$ or a tangle $T \subset B^3$ and a finite cyclic symmetry $\tau$ of $L$ or $T$, an \emph{intravergent} diagram for $(L,\tau)$ or $(T,\tau)$ is a diagram such that $\tau$ is conjugate to rotation within the plane of the diagram. See Figure \ref{fig:rotational} for an example. 
\end{definition}
\begin{figure}
\includegraphics{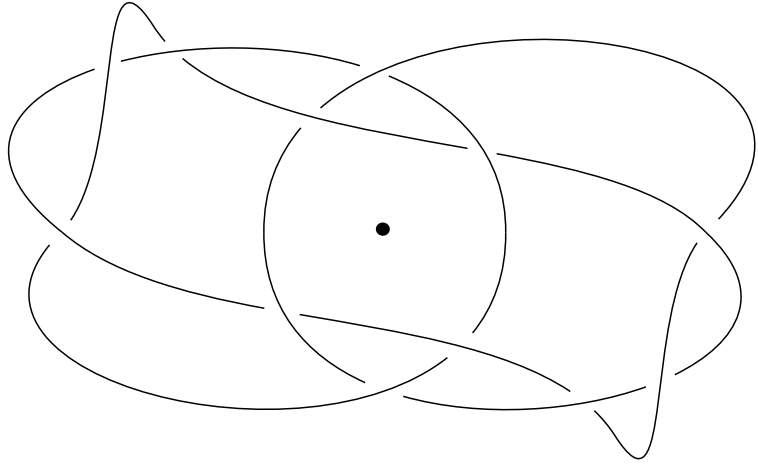}
\caption{An intravergent diagram for a periodic involution on an alternating knot. The axis of rotation is perpendicular to the page, and is shown as a dot in the center.}
\label{fig:rotational}
\end{figure}
Note that any period (with no fixed points on the link) or any involution with a pair of fixed points (with the axis of symmetry intersecting the plane at a crossing) can be shown with an intravergent diagram, but an intravergent diagram cannot show an involution with fixed points on multiple components of the link. If $\tau$ is a period, this is commonly called a \emph{periodic} diagram.
\begin{definition}
Given a link $L \subset S^3$ or a tangle $T \subset B^3$ and an involution $\tau$ on $L$ or $T$, a \emph{transvergent} diagram for $(L,\tau)$ or $(T, \tau)$ is a diagram such that $\tau$ is conjugate to a rotation around an axis contained within the plane of the diagram. See Figure \ref{fig:reflective} for an example.
\end{definition}
\begin{figure}
\includegraphics{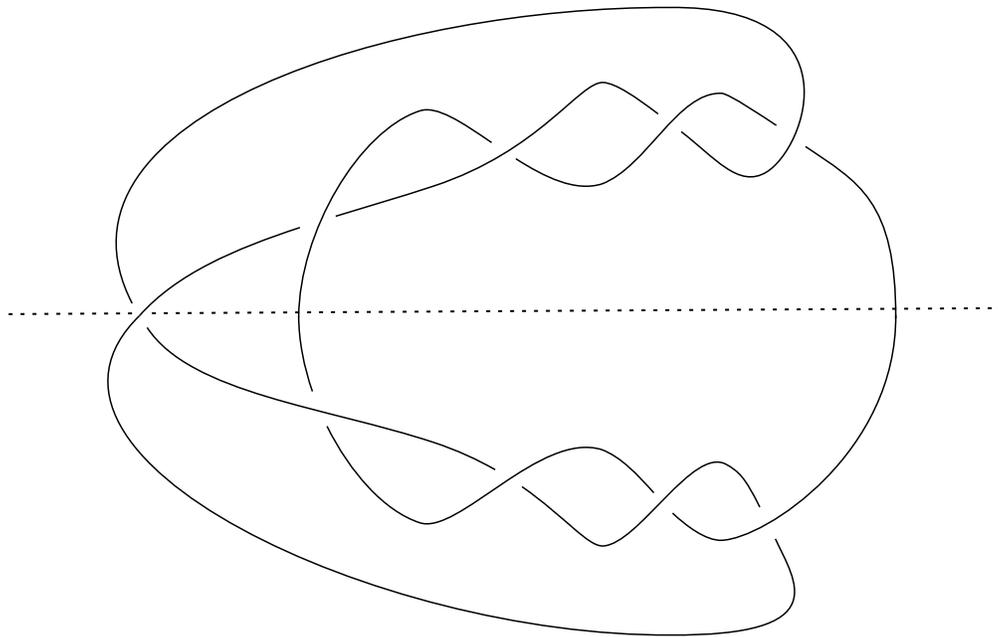}
\caption{A transvergent diagram for a strong inversion on an alternating knot. The axis of rotation is the dashed line.}
\label{fig:reflective}
\end{figure}
Note that an involution with any number of fixed points on the link can be shown in a transvergent diagram (unlike an intravergent diagram). For this reason, this is the more general type of symmetric diagram to use for involutions of links. See for example \cite{LW} where a transvergent diagram is called involutive.

\begin{remark}
The terms \emph{intravergent} and \emph{transvergent} were constructed from the latin root \emph{vergo} (I turn) and the prefixes \emph{intra} (within) and \emph{trans} (through). Thus \emph{intravergent} refers to a rotation within the plane of the diagram and \emph{transvergent} refers to a rotation through the plane of the diagram.
\end{remark}
\begin{definition}
Given an alternating link $L \subset S^3$ and a free involution on $S^3$ which fixes $L$, a \emph{freely periodic} alternating diagram for $(L, \tau)$ is a diagram for $L$ consisting of two identical tangles with two additional crossings connected in the configuration shown in Figure \ref{fig:free}. 
\end{definition}
\begin{remark}
A more general freely periodic diagram includes a half twist on $n$ strands (see for example \cite{C} or \cite[Figure 1(b)]{PY}). Since a half twist on $n$ strands will be alternating only if $n = 1,2$ and prime only if $n \neq 1$, we only consider the case of 2 strands in this paper.
\end{remark}
\begin{figure}
\scalebox{1.3}{\includegraphics{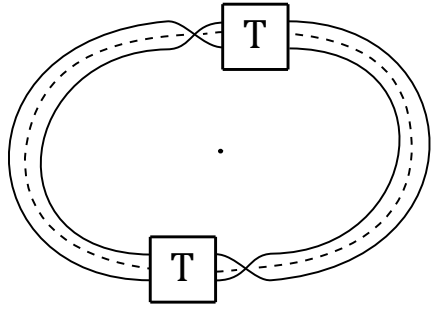}}
\caption{A freely 2-periodic alternating diagram. Here $T$ can be any alternating tangle and the shown double-points should be resolved into crossings so that the entire diagram is alternating. The free involution is given by a $\pi$ rotation around the dashed axis, and then a $\pi$ rotation within the plane of the diagram around the shown dot. Note that this diagram necessarily has an even number of components.}
\label{fig:free}
\end{figure}

\section{Flypes \label{sec:flype}}
In this section we discuss compositions of flypes on a prime alternating link $L \subset S^3$. This section is largely similar to \cite[Section 2]{B}, but we reproduce it here for clarity. We begin with some definitions.
\begin{definition}
The \emph{standard crossing ball} $B_{\mbox{\tiny{std}}} = (B^3, D^2, a_1 \cup a_2)$ is the triple of 
\begin{enumerate}
\item the 3-ball $\{(x,y,z) \in \mathbb{R}^3 \mid |(x,y,z)| \leq 1\}$,
\item the horizontal unit disk inside this ball $\{(x,y,z) \in \mathbb{R}^3 \mid z=0 \mbox{ and } |(x,y,z)| \leq 1\}$, and
\item the union of the two arcs $a_1 = \{(x,y,z) \in \mathbb{R}^3 \mid x=0, z \geq 0 \mbox{ and } y^2 + z^2 = 1\}$ and $a_2 = \{(x,y,z) \in \mathbb{R}^3 \mid y=0, z\leq 0 \mbox{ and } x^2 + z^2 = 1\}$.
\end{enumerate}
\end{definition}
\begin{definition}
A \emph{realized diagram} $\lambda(D)$ for a link $L \subset S^3$ is a collection of smooth embeddings
\begin{enumerate}
\item $S^2 \hookrightarrow S^3$, the \emph{projection sphere},
\item $L: (S^1)^n \hookrightarrow S^3$, the \emph{link}, and
\item $\{B_i\} \hookrightarrow S^3$, the \emph{crossing balls},
\end{enumerate}
such that the $\{B_i\}$ are disjoint and $L \subset S^2 \cup \{B_i\}$, along with homeomorphisms of triples $c_i: (B_i, B_i \cap S^2, B_i \cap L)\to B_{\mbox{\tiny{std}}}$, the \emph{crossing ball identification maps}. The \emph{diagram} $D$ is the labeled graph in $S^2$ which is the projection of $L$ with vertices labeled to indicate under and over crossings.
\end{definition}
We will also apply the term \emph{realized diagram} to a tangle by restricting the above definition to a disk $D^3 \subset S^3$ which $L$ intersects transversely and for which $\partial D^3$ is disjoint from all crossing balls. 

\begin{definition}
An \emph{isomorphism} of realized diagrams $g: \lambda(D) \to \lambda(\overline{D})$ is a homeomorphism of pairs $g: (S^3, L) \to (S^3, \overline{L})$ such that $g(S^2)$ is isotopic to $\overline{S}^2$ relative to $\overline{L}$, $g(B_i) = \overline{B}_i$, and either
\begin{enumerate}
\item $g$ preserves the orientation of $S^2$, and $\overline{c}_i \circ g= c_i$, or
\item $g$ reverses the orientation of $S^2$, and $\overline{c}_i \circ g = m \circ c_i$,
\end{enumerate}
where $m: B_{\mbox{\tiny{std}}} \to B_{\mbox{\tiny{std}}}$ is $\pi$ rotation around the line $x=y$ and $z = 0$. In case (1) we refer to the isomorphism as \emph{orientation preserving}, and in case (2) we refer to the isomorphism as \emph{orientation reversing}. All isomorphisms must preserve the orientation on $S^3$, but need not preserve the orientation on $L$.
\end{definition}
It is immediate that $\lambda(D)$ and $\lambda(E)$ are isomorphic realized diagrams if and only if $D$ and $E$ are isomorphic diagrams. We also note that the three basic involutions from the previous section all induce automorphisms of realized diagrams. For an intravergent involution, the automorphism is orientation preserving; for transvergent and freely periodic involutions, the automorphism is orientation reversing. In fact, these are the only involutions which can be constructed as an automorphism of realized diagrams.

\begin{lemma}
\label{lemma:floop}
An involution of a realized diagram $g:\lambda(D) \to \lambda(D)$ is either intravergent, transvergent, or freely periodic. Furthermore, if $D$ is a tangle diagram (as opposed to a link diagram) then $g$ is not freely periodic.
\end{lemma}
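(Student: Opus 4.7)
The plan is to combine Smith theory with a case analysis on the action of $g$ on the projection sphere $S^2$. Since $g$ is a smooth orientation-preserving involution of $S^3$, Smith's theorem together with the resolved Smith conjecture (both cited in the introduction) give that, up to conjugation, $g$ is either the antipodal map on $S^3$ (acting freely) or rotation by $\pi$ around an unknotted circle $C \subset S^3$. My goal is to match these possibilities against the configuration of $C$ relative to $S^2$ and conclude that $g$ is intravergent, transvergent, or freely periodic.

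First I would arrange, after conjugating by an ambient isotopy supported in the complement of $L$, that $g(S^2) = S^2$ exactly; the isomorphism definition only requires $g(S^2)$ to be isotopic to $S^2$ rel $L$, and since $g$ also permutes the crossing balls the isotopy can be chosen to preserve the realized diagram structure. Having done so, $g|_{S^2}$ is an involution of $S^2$, and the isomorphism data records whether this restriction is orientation-preserving or orientation-reversing. The key bookkeeping fact, forced by $g$ preserving the orientation of $S^3$, is that $g$ preserves the two complementary balls $B_\pm$ of $S^3 \setminus S^2$ when $g|_{S^2}$ preserves orientation and swaps them when $g|_{S^2}$ reverses orientation.

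The main case analysis is then direct. If $g|_{S^2}$ is orientation-preserving, it is either the identity (in which case $g$ fixes a separating $S^2$ pointwise while preserving the orientation of $S^3$, forcing $g$ to be the identity on $S^3$ and contradicting that $g$ is a nontrivial involution) or conjugate to a rotation with exactly two fixed points. In the latter case, those two points lie on the fixed axis $C$ and $C$ meets $S^2$ transversely only there, so $(D, g)$ is intravergent. If $g|_{S^2}$ is orientation-reversing, then it is conjugate either to a reflection fixing a circle $C' \subset S^2$ or to the antipodal map on $S^2$. In the reflection case, $C'$ is the full fixed set of $g$ and lies in $S^2$, so $(D, g)$ is transvergent. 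In the antipodal subcase, since $g$ swaps $B_+$ and $B_-$, any fixed point of $g$ outside $S^2$ would lie in some $B_\pm$ and be sent to the other, contradicting it being a fixed point; combined with $g|_{S^2}$ being fixed-point free, this means $g$ acts freely on $S^3$, which is exactly the freely periodic case.

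For the tangle case, the realized diagram is supported in a closed ball $D^3 \subset S^3$, which $g$ must preserve. Brouwer's fixed point theorem applied directly to the continuous self-map $g \colon D^3 \to D^3$ gives a fixed point, so $g$ cannot act freely, ruling out the freely periodic case. The main obstacle I anticipate is the initial reduction to $g(S^2) = S^2$, which requires carrying the ambient isotopy through the crossing-ball structure while preserving the finite order of $g$; once that is done, the case analysis reduces to standard facts about involutions of $S^2$ and the orientation constraints above.
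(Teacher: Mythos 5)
Your proof is correct and follows essentially the same route as the paper: reduce to the induced involution of the projection sphere $S^2$, classify it into its three conjugacy classes (rotation about two points, reflection in a circle, antipodal map), and match these with the intravergent, transvergent, and freely periodic cases respectively. The only difference is cosmetic, in the tangle case: you apply Brouwer to the invariant ball $D^3$, while the paper observes that the antipodal map on $S^2$ preserves no disk set-wise; both arguments are valid.
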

\begin{proof}
There are only three conjugacy classes of involutions on $S^2$, which are classified by their fixed sets. They are
\begin{enumerate}
	\item the antipodal map, which arises from the antipodal map on $S^3$ and corresponds to a freely periodic diagram,
	\item reflection across a circle, which arises from rotation around that circle in $S^3$ and corresponds to a transvergent diagram, and
	\item rotation around a pair of points, which arises from rotation around an axis through those points in $S^3$ and corresponds to an intravergent diagram.
\end{enumerate}
Since case (1) has no set-wise fixed disk, it cannot give rise to a tangle diagram.
\end{proof}

\begin{definition}
A \emph{standard flype} is a transformation between realized diagrams $\lambda(D) \to \lambda(E)$ of the form shown in Figure \ref{fig:flype}. More precisely, it is a homeomorphism $s:S^3 \to S^3$ which restricts to the identity on a round ball containing $T_2$ (shown as the exterior of $\alpha_2$), $\pi$ rotation around the horizontal axis on a ball containing $T_1$ (shown as the interior of $\alpha_1$) and a linear homotopy between them. We will refer to the crossing ball $B_1$ (or the respective crossing in the underlying diagram) as the crossing \emph{removed} or \emph{destroyed} by the flype, and to the crossing ball $B_2$ as the crossing \emph{created} by the flype.
\end{definition}

\begin{definition}
\label{definition:flype}
A \emph{flype} $f: \lambda(D) \to \lambda(D')$ is any composition $f = g_2 \circ s \circ g_1$ where $g_1$ and $g_2$ are isomorphisms of realized diagrams, and $s$ is a standard flype. We will refer to the crossing ball created by $f$, $g_2(B_2)$ in $\lambda(D')$, as $c(f)$, and the crossing ball destroyed by $f$, $g_1^{-1}(B_1)$ in $\lambda(D)$, as $d(f)$. The ball $\alpha_2$ containing the tangle $T_1$ and the crossing ball $B_1$ will be referred to as the \emph{domain} of $f$. 
\end{definition}
Our notions of flype and standard flype are slightly more general than the original definitions in \cite{MT}, making Theorem \ref{thm:mt} nominally weaker. This is sufficient for our purposes, however.

We now introduce some notation for compositions of flypes. 
\begin{definition}
A \emph{composable collection of flypes} is an ordered set $F = (f_n,f_{n-1},\dots,f_1)$ of flypes which can be composed in the same order. We refer to the automorphism of pairs $(S^3,L)$ given by their composition $f_n \circ f_{n-1} \circ \dots \circ f_1$ as $F_{\circ}$. Two composable collections of flypes $F$ and $H$ are $\emph{equivalent}$ if $F_{\circ} = H_{\circ}$. We will refer to a set containing a single isomorphism of realized diagrams as a composable collection of zero flypes.
\end{definition}

We also need the following equivalence relation.
 
\begin{definition}
Let $F = (f_n, f_{n-1}, \dots, f_1)$ with $f_i: \lambda(D_i) \to \lambda(D_{i+1})$, and consider the set of all crossing balls in all $D_i$. We generate an equivalence relation on this set by declaring that $[B_i] = [f_i(B_i)]$ for all $B_i$ in $D_i$. If two crossing balls $B_i$ and $B_j$ are in the same equivalence class under this relation we say that $B_i$ and $B_j$ are \emph{equivalent mod} $F$, and write $[B_i]_F = [B_j]_F$. If $F$ is clear from context, then we simply write $[B_i] = [B_j]$. 
\end{definition}
This equivalence can be thought of as identifying crossings $B_i$ and $B_j$ in different diagrams if a sequence of flypes takes $B_i$ to $B_j$.

We will now give some basic facts about flypes. Consider a flype $f: \lambda(D) \to \lambda(D')$. In the underlying planar graph $D$ we get a distinguished vertex corresponding to the crossing ball $d(f)$ and a distinguished pair of edges $(e^1_{c(f)}, e^2_{c(f)})$ corresponding to the crossing ball $c(f)$. That is, the flype replaces this pair of edges with a vertex corresponding to $c(f)$ and four edges connecting this vertex to the vertices at the ends of $e^1_{c(f)}$ and $e^2_{c(f)}$. Similarly, we have an edge pair $(e^1_{d(f)},e^2_{d(f)})$ in $D'$ corresponding to $d(f)$. 
\begin{definition}
Let $D \subset S^2$ be a 4-valent graph on $S^2$ with vertices labeled to indicate crossing types so that $D$ is a standard link diagram. Let $\gamma$ in $S^2$ be an oriented closed curve which intersects $D$ in two edges $(e^1_{c(f)},e^2_{c(f)})$ and a vertex $d(f)$. A \emph{diagrammatic flype} $f: D \to D'$ replaces $(e^1_{c(f)},e^2_{c(f)})$ with $c(f)$ and four incident edges as described above and replaces $d(f)$ and its incident edges with $(e^1_{d(f)},e^2_{d(f)})$. The flype $f$ also reflects the so-far-unchanged part of $D$ in the interior of $\alpha$ across a line connecting $c(f)$ and $d(f)$ (see Figure \ref{fig:flype}).
\end{definition}

The following lemma states that a diagrammatic flype is enough to reconstruct a flype. 
\begin{lemma}
\label{lemma:unique}
Let $f: \lambda(D) \to \lambda(D')$ and $g: \lambda'(D) \to \lambda'(D')$ be flypes such that $(e^1_{c(f)}, e^2_{c(f)}) = (e^1_{c(g)},e^2_{c(g)})$ and $d(f) = d(g)$, or inversely such that $(e^1_{d(f)}, e^2_{d(f)}) = (e^1_{d(g)},e^2_{d(g)})$ and $c(f) = c(g)$. Then there exists a pair of isomorphisms of realized diagrams $g_1,g_2$ such that $f = g_1 \circ g \circ g_2$.
\end{lemma}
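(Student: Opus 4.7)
The plan is to prove that a flype is determined by its diagrammatic data up to pre- and post-composition by isomorphisms of realized diagrams. The first step is to verify that the data $(d(f), e^1_{c(f)}, e^2_{c(f)})$ picks out the domain ball $\alpha$ uniquely up to isotopy rel $L$. In $S^2$, the vertex $d(f)$ together with the edge pair $(e^1_{c(f)}, e^2_{c(f)})$ determines (up to isotopy rel $D$) a simple closed curve $\gamma$ which meets $D$ transversely in those two edges and bounds a disk containing $d(f)$. Pushing $\gamma$ slightly into $S^3$ off the crossing ball at $d(f)$ produces a sphere bounding a ball $\alpha$ that contains the tangle $T_1$, and this ball is unique up to ambient isotopy rel $L$ and the realized-diagram structure exterior to $\alpha$.

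The second step handles the fact that the source diagrams $\lambda(D)$ and $\lambda'(D)$ (and similarly the targets) need not be identical. Using the observation in the text that realized diagrams with isomorphic underlying diagrams are themselves isomorphic, fix isomorphisms $h_1 \colon \lambda(D) \to \lambda'(D)$ and $h_2 \colon \lambda(D') \to \lambda'(D')$. Replacing $g$ with $h_2^{-1} \circ g \circ h_1$, it suffices to treat the case when $f$ and $g$ share both source and target.

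With this normalization, unpack $f = g_2^f \circ s^f \circ g_1^f$ and $g = g_2^g \circ s^g \circ g_1^g$. The isomorphisms $g_1^f$ and $g_1^g$ both send the common ball $\alpha$ identified in the first step onto the tangle ball of the standard flype model, so $g_1^g \circ (g_1^f)^{-1}$ restricts to an automorphism of the standard model that fixes the tangle ball. Post-composing $g_1^f$ with this automorphism arranges $g_1^f = g_1^g$, and a symmetric adjustment on the target side using $g_2^f$ and $g_2^g$ makes $s^f$ and $s^g$ coincide, because the standard flype is a rigidly defined map. Reassembling the adjustment isomorphisms together with $h_1$ and $h_2$ yields the pair $(g_1, g_2)$ satisfying $f = g_1 \circ g \circ g_2$.

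Finally, the inverse case of the hypothesis reduces to the first: $f^{-1}$ and $g^{-1}$ are themselves flypes whose direct diagrammatic data agrees (the destroyed vertex of $f^{-1}$ is $c(f)$, and the created edge pair is $(e^1_{d(f)}, e^2_{d(f)})$), so applying the first case to $f^{-1}$ and $g^{-1}$ and inverting yields the required factorization. The main obstacle will be the normalization in the second and third steps: even after fixing the tangle ball, the realized-diagram structure exterior to $\alpha$ may differ between the sources or targets of $f$ and $g$, and one must carefully produce the automorphism of the standard model that corrects this discrepancy. Some vigilance with orientation is also required, since isomorphisms of realized diagrams can reverse the orientation of the projection sphere and act nontrivially on the crossing ball identification maps.
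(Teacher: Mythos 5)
Your overall architecture matches the paper's: both proofs begin by using the fact that isomorphic underlying diagrams give isomorphic realized diagrams to normalize $f$ and $g$ to a common source and target, and both then argue that two flypes inducing the same diagrammatic flype differ only by isomorphisms of realized diagrams. Where you diverge is in how that last uniqueness step is carried out. The paper's route is shorter and more robust: after normalization, $f$ and $g$ induce the same diagrammatic flype, hence restrict to the \emph{same} map on each crossing ball (this is forced, because the definition of isomorphism of realized diagrams pins down the map on a crossing ball via the identification maps $c_i$, up to the involution $m$ determined by whether the orientation of $S^2$ is preserved), and then the extension over the complementary balls is unique up to isomorphism of realized diagrams by Alexander's theorem. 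You instead unpack $f=g_2^f\circ s^f\circ g_1^f$ and $g=g_2^g\circ s^g\circ g_1^g$ and try to match the factors. That can be made to work --- the clean way is to show that $s^{g}\circ a\circ (s^{f})^{-1}$, with $a=g_1^{g}\circ(g_1^{f})^{-1}$, is itself an isomorphism of realized diagrams, which follows from checking its effect on crossing-ball identification maps (inside the flyped ball one conjugates by $m$, which causes no trouble since $m^2=\mathrm{id}$) --- but your stated justification, that ``the standard flype is a rigidly defined map,'' is exactly the point that needs an argument: a standard flype is only specified up to the choice of homotopy between the rotated and fixed regions, so two standard flypes with the same source and domain ball need not literally coincide, only agree up to isomorphism. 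You flag this yourself as the main obstacle, and it is precisely where the paper's appeal to Alexander's theorem (homeomorphisms of balls rel boundary are unique up to isotopy) does the work; without some version of that input your matching step is asserted rather than proved. Your reduction of the second hypothesis (agreement of $(e^1_{d},e^2_{d})$ and $c$) to the first by passing to $f^{-1}$ and $g^{-1}$ is a nice touch the paper leaves implicit.
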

\begin{proof}
To begin, note that there is an isomorphism between $\lambda(D)$ and $\lambda'(D)$, so that we may consider $f$ and $g$ to start at the same realized diagram. Similarly, there is an isomorphism from $\lambda(D')$ to $\lambda'(D')$, so we may assume $f$ and $g$ end at the same realized diagram. Note that $f$ and $g$ induce the same diagrammatic flype on $D$. In particular $f$ and $g$ restrict to the same map on crossing balls, since both are determined by the crossing ball identification maps for $\lambda(D)$ and $\lambda(D')$. From there we have a unique extension to the rest of $S^3$ (up to an isomorphism of realized diagrams) by Alexander's Theorem.
\end{proof}

\begin{figure}
\scalebox{0.7}{
\begin{tikzpicture}
\begin{knot}[
clip width = 15, 
flip crossing = 1,
]
\strand[black , thick] (8,0) .. controls +(1,0) and +(0,1) .. (9,-1) .. controls +(0,-2) and +(-3,-3) .. (0,0) .. controls +(0,0) and +(-1,0) .. (2,1);
\strand[black , thick] (8,1) .. controls +(1,0) and +(0,-1) .. (9,2) .. controls +(0,2) and +(-3,3) .. (0,1) .. controls +(0,0) and +(-1,0) .. (2,0);
\end{knot}
\draw[black, ultra thick] (2, -.53) -- (2,1.5) -- (4,1.5) -- (4,-.5) -- (2,-.5);
\draw[black, thick] (4,1) -- (6,1);
\draw[black, thick] (4,0) -- (6,0);
\draw[black, ultra thick] (6, -.53) -- (6,1.5) -- (8,1.5) -- (8,-.5) -- (6,-.5);

\draw[black, thick, dotted] (1,.5) .. controls +(0,2.5) and +(0,2.5) .. (4.5,.5);
\draw[black, thick, dotted] (4.5,.5) .. controls +(0,-2.5) and +(0,-2.5) .. (1,.5);

\draw[black, thick, dotted] (0,.5) .. controls +(0,3.5) and +(0,3.5) .. (5.5,.5);
\draw[black, thick, dotted] (5.5,.5) .. controls +(0,-3.5) and +(0,-3.5) .. (0,.5);

\node at (3,.5) {\huge{$T_1$}};
\node at (7,.5) {\huge{$T_2$}};

\node at (4.2, 2){$\alpha_1$};
\node at (4.3, -2){$\alpha_2$};
\node at (0.65,0.85){$B_1$};
\node at (4,-4){\huge{$\lambda(D)$}};
\end{tikzpicture}}
\scalebox{0.7}{
\begin{tikzpicture}
\begin{knot}[
clip width = 15, 
]
\strand[black, thick] (4,1) .. controls +(1,0) and +(-1,0) .. (6,0);
\strand[black, thick] (4,0) .. controls +(1,0) and +(-1,0) .. (6,1);
\end{knot}
\draw[black, ultra thick] (2, -.53) -- (2,1.5) -- (4,1.5) -- (4,-.5) -- (2,-.5);
\draw[black, thick, dotted] (1,.5) .. controls +(0,2.5) and +(0,2.5) .. (4.5,.5);
\draw[black, thick, dotted] (4.5,.5) .. controls +(0,-2.5) and +(0,-2.5) .. (1,.5);

\draw[black, thick, dotted] (0,.5) .. controls +(0,3.5) and +(0,3.5) .. (5.5,.5);
\draw[black, thick, dotted] (5.5,.5) .. controls +(0,-3.5) and +(0,-3.5) .. (0,.5);
\draw[black, ultra thick] (6, -.53) -- (6,1.5) -- (8,1.5) -- (8,-.5) -- (6,-.5);

\draw[black , thick] (8,0) .. controls +(1,0) and +(0,1) .. (9,-1) .. controls +(0,-2) and +(-2,-2) .. (0,-1) .. controls +(1,1) and +(-1,0) .. (2,0);
\draw[black , thick] (8,1) .. controls +(1,0) and +(0,-1) .. (9,2) .. controls +(0,2) and +(-2,2) .. (0,2) .. controls +(1,-1) and +(-1,0) .. (2,1);

\node at (3,.5) {\raisebox{\depth}{\scalebox{1}[-1]{\huge{$T_1$}}}};
\node at (7,.5) {\huge{$T_2$}};

\node at (4.2, 2){$\alpha_1$};
\node at (4.3, -2){$\alpha_2$};
\node at (5,0.95){$B_2$};
\node at (4,-4){\huge{$\lambda(E)$}};
\end{tikzpicture}}
\caption{A standard flype fixes the exterior of $\alpha_2$ and rotates the interior of $\alpha_1$ by $\pi$ around the horizontal axis, with a linear homotopy in between. It removes the crossing ball $B_1$ and creates the crossing ball $B_2$.}
\label{fig:flype}
\end{figure}
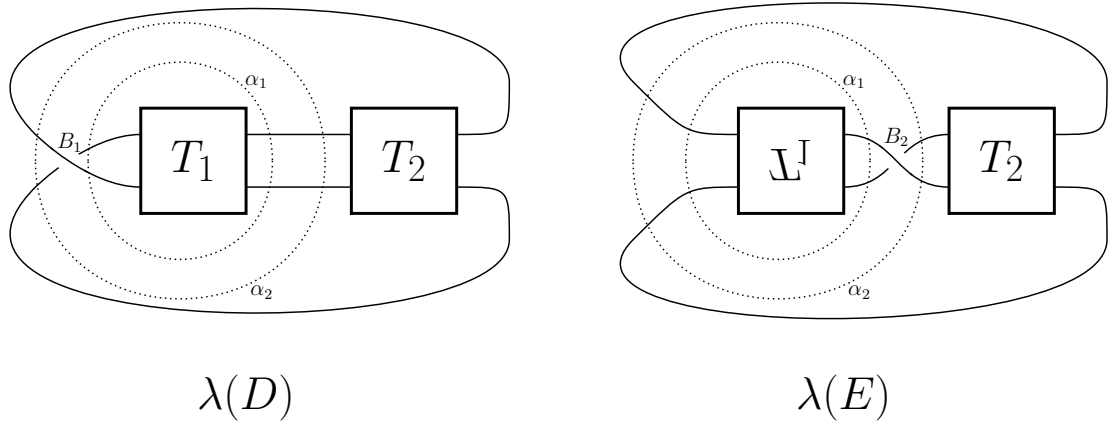

\begin{lemma}
\label{lemma:exists}
Let $\lambda(D)$ and $\lambda(D')$ be realized reduced alternating diagrams for $L$, and let $N(L)$ be a regular neighborhood of $L$. If $f: \lambda(D) \to \lambda(D')$ is a homeomorphism $S^3 \to S^3$ such that $f$ agrees with a flype when restricted to $(N(L)\cap S^2)\cup\{c_i\}$, then $f$ is a flype. That is, if $f$ induces a diagrammatic flype on the underlying diagram, then $f$ is a flype.
\end{lemma}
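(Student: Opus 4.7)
The plan is to adapt the uniqueness argument from the proof of Lemma \ref{lemma:unique}. First, I would construct an honest flype $\tilde f : \lambda(D) \to \lambda(D')$ which induces the same diagrammatic flype on $D$ as $f$ does. Concretely, the diagrammatic flype selects a distinguished vertex $d(f)$, a distinguished edge pair $(e^1_{c(f)}, e^2_{c(f)})$, and a curve $\gamma$ in $S^2$; I can produce $\tilde f = g_2 \circ s \circ g_1$ by choosing isomorphisms of realized diagrams $g_1, g_2$ that identify the standard flype template of Figure \ref{fig:flype} with this combinatorial data.

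Next, I would consider the self-homeomorphism $h := \tilde f^{-1} \circ f : S^3 \to S^3$. By hypothesis $f$ and $\tilde f$ agree on $(N(L) \cap S^2) \cup \{c_i\}$, so $h$ restricts to the identity on this set. In particular $h$ fixes $L$ pointwise, fixes a neighborhood of $L$ in the projection sphere $S^2$, and preserves each crossing ball respecting its identification map. I claim $h$ is an isomorphism of realized diagrams $\lambda(D) \to \lambda(D)$. The argument mirrors that of Lemma \ref{lemma:unique}: cutting $S^2$ along $L$ gives a collection of 2-disk faces, and cutting $S^3$ along $S^2$ and the crossing ball boundaries gives two complementary 3-balls. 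Since $h$ is already the identity on all the boundary data, Alexander's theorem, applied first face by face on $S^2$ and then on the two complementary 3-balls, shows that $h$ may be isotoped rel $L$ and the crossing balls so as to preserve $S^2$ setwise; this verifies that $h$ is an isomorphism of realized diagrams in the sense of the definition before Lemma \ref{lemma:floop}.

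Once $h$ is known to be an isomorphism of realized diagrams, we have
\[
f \;=\; \tilde f \circ h \;=\; g_2 \circ s \circ (g_1 \circ h),
\]
and since $g_1 \circ h$ is again an isomorphism of realized diagrams, this presentation exhibits $f$ as a flype in the sense of Definition \ref{definition:flype}. The main obstacle I expect is the verification in the second step that $h$ preserves the projection sphere up to isotopy rel $L$: knowing merely that $h$ fixes a neighborhood of $L$ in $S^2$ is not obviously sufficient, since $h$ could a priori push the interior of a face off the projection sphere, and the Alexander-type piecewise argument must be performed carefully, exploiting that the diagram is reduced and alternating only through the fact that every face of the diagram and every component of the complement of $S^2 \cup \{B_i\}$ in $S^3$ is an open disk or ball on which Alexander's theorem applies.
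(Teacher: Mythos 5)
Your proposal is correct and follows essentially the same route as the paper: construct a genuine flype realizing the same diagrammatic data, observe that the discrepancy with $f$ is the identity on the crossing balls and on $N(L)\cap S^2$, conclude it is an isomorphism of realized diagrams, and absorb it into the flype (the paper post-composes, writing $g = f\circ\varphi^{-1}$, while you pre-compose, which is an immaterial difference). Your extra care with the Alexander-theorem step is a more detailed version of what the paper asserts in one line.
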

\begin{proof}
Let $\varphi:\lambda(D) \to \lambda(D')$ be a flype, so that $f$ and $\varphi$ agree when restricted to both the crossing balls and a neighborhood of $L$ in $S^2$. Let $g = f \circ \varphi^{-1}:\lambda(D') \to \lambda(D')$, and observe that $g$ is the identity map on each crossing ball and on $N(L) \cap S^2$. In particular, this determines the relative isotopy class of $g(S^2)$ so that $g$ is an isomorphism of realized diagrams. But then $f = g \circ \varphi$, so $f$ is a flype as desired.
\end{proof}
By combining Lemmas \ref{lemma:unique} and \ref{lemma:exists} we see that a diagrammatic description of a flype determines it uniquely up to isomorphism of realized diagrams. We now turn to minimizing the complexity of a collection of composable flypes in the following sense.

\begin{definition}
Let $F$ be a composable collection of flypes $(f_n, f_{n-1},\dots, f_1)$, and let $x_i$ be the number of crossing balls in the domain of $f_i$. Then the \emph{complexity} of $F$ is defined to be
\[
\mbox{cx}(F) = n + \sum_{i=1}^n x_i.
\] 
The collection $F$ is \emph{reduced} if cx($F$) is minimal among collections equivalent to $F$.
\end{definition}
Note that the complexity of a collection $F$ is 0 if and only if the collection contains no flypes so that $F_{\circ}$ is an isomorphism of realized diagrams. In particular, isomorphisms of realized diagrams are reduced.

\begin{figure}
\scalebox{0.7}{
\begin{tikzpicture}
\begin{knot}[
clip width = 15, 
flip crossing = 1,
]
\strand[black , thick] (11,0) .. controls +(1,0) and +(0,1) .. (12,-1) .. controls +(0,-2) and +(-3,-3) .. (0,0) .. controls +(0,0) and +(-1,0) .. (2,1);
\strand[black , thick] (11,1) .. controls +(1,0) and +(0,-1) .. (12,2) .. controls +(0,2) and +(-3,3) .. (0,1) .. controls +(0,0) and +(-1,0) .. (2,0);
\end{knot}
\draw[black, ultra thick] (2, -.53) -- (2,1.5) -- (4,1.5) -- (4,-.5) -- (2,-.5);
\draw[black, ultra thick] (5, .5) -- (5,1.5) -- (6,1.5) -- (6,.5) -- (5,.5);
\draw[black, ultra thick] (7, .5) -- (7,1.5) -- (8,1.5) -- (8,.5) -- (7,.5);
\draw[black, thick] (4,1) -- (5,1);
\draw[black, thick] (6,1) -- (7,1);
\draw[black, thick] (8,1) -- (9,1);
\draw[black, thick] (4,0) -- (9,0);
\draw[black, ultra thick] (9, -.53) -- (9,1.5) -- (11,1.5) -- (11,-.5) -- (9,-.5);

\draw[black, thick, dotted] (4.5,1) .. controls +(0,1.2) and +(0,1.2) .. (8.5,1);
\draw[black, thick, dotted] (8.5,1) .. controls +(0,-1.2) and +(0,-1.2) .. (4.5,1);

\draw[black, thick, dotted] (.5,.5) .. controls +(0,3) and +(0,3) .. (6.5,.5);
\draw[black, thick, dotted] (6.5,.5) .. controls +(0,-3) and +(0,-3) .. (.5,.5);

\node at (3,.5) {\huge{$T_2$}};
\node at (10,.5) {\huge{$T_1$}};
\node at (7.5,1){$T_3$};
\node at (5.5,1){$T_4$};

\node at (3, 2.3){$\alpha_{f_1}$};
\node at (7.5, 2){$\alpha_{f_2}$};
\node at (6,-4){\huge{$(A)$}};
\end{tikzpicture}}
\scalebox{0.7}{
\begin{tikzpicture}
\begin{knot}[
clip width = 15, 
flip crossing = 1,
]
\strand[black , thick] (8,0) .. controls +(1,0) and +(0,1) .. (9,-1) .. controls +(0,-2) and +(-3,-3) .. (0,0) .. controls +(0,0) and +(-1,0) .. (2,1);
\strand[black , thick] (8,1) .. controls +(1,0) and +(0,-1) .. (9,2) .. controls +(0,2) and +(-3,3) .. (0,1) .. controls +(0,0) and +(-1,0) .. (2,0);
\end{knot}
\draw[black, ultra thick] (2, -.53) -- (2,1.5) -- (4,1.5) -- (4,-.5) -- (2,-.5);
\draw[black, thick] (4,1) -- (6,1);
\draw[black, thick] (4,0) -- (6,0);
\draw[black, ultra thick] (6, -.53) -- (6,1.5) -- (8,1.5) -- (8,-.5) -- (6,-.5);

\draw[black, thick, dotted] (5,.5) .. controls +(0,2.5) and +(0,2.5) .. (8.5,.5);
\draw[black, thick, dotted] (8.5,.5) .. controls +(0,-2.5) and +(0,-2.5) .. (5,.5);

\draw[black, thick, dotted] (.5,.5) .. controls +(0,3) and +(0,3) .. (5,.5);
\draw[black, thick, dotted] (5,.5) .. controls +(0,-3) and +(0,-3) .. (.5,.5);

\node at (3,.5) {\huge{$T_2$}};
\node at (7,.5) {\huge{$T_1$}};

\node at (4, 2){$\alpha_{f_1}$};
\node at (6.3, -1.7){$\alpha_{f_2}$};
\node at (4,-4){\huge{$(B)$}};
\end{tikzpicture}} \hskip -.5 in.
\scalebox{0.7}{
\begin{tikzpicture}
\begin{knot}[
clip width = 15, 
flip crossing = 1,
]
\strand[black , thick] (8,0) .. controls +(1,0) and +(0,1) .. (9,-1) .. controls +(0,-2) and +(-3,-3) .. (0,0) .. controls +(0,0) and +(-1,0) .. (2,1);
\strand[black , thick] (8,1) .. controls +(1,0) and +(0,-1) .. (9,2) .. controls +(0,2) and +(-3,3) .. (0,1) .. controls +(0,0) and +(-1,0) .. (2,0);
\end{knot}
\draw[black, ultra thick] (2, -.53) -- (2,1.5) -- (4,1.5) -- (4,-.5) -- (2,-.5);
\draw[black, thick] (4,1) -- (6,1);
\draw[black, thick] (4,0) -- (6,0);
\draw[black, ultra thick] (6, -.53) -- (6,1.5) -- (8,1.5) -- (8,-.5) -- (6,-.5);

\draw[black, thick, dotted] (5,.5) .. controls +(0,2.5) and +(0,2.5) .. (8.5,.5);
\draw[black, thick, dotted] (8.5,.5) .. controls +(0,-2.5) and +(0,-2.5) .. (5,.5);

\draw[black, thick, dotted] (.5,.5) .. controls +(0,3.5) and +(0,3.5) .. (8.5,.5);
\draw[black, thick, dotted] (8.5,.5) .. controls +(0,-3.5) and +(0,-3.5) .. (.5,.5);

\node at (3,.5) {\huge{$T_2$}};
\node at (7,.5) {\huge{$T_1$}};

\node at (3, 2.5){$\alpha_{f_1}$};
\node at (6, -1){$\alpha_{f_2}$};
\node at (4,-4){\huge{$(C)$}};
\end{tikzpicture}}

\caption{Three potential configurations for the composition of two flypes $f_1:\lambda(D) \to \lambda(D')$ and $f_2:\lambda(D') \to \lambda(D'')$ with $c(f_1) = d(f_2)$. The diagrams shown are $D$ (as opposed to $D'$ or $D''$).}
\label{fig:orientations}
\end{figure}
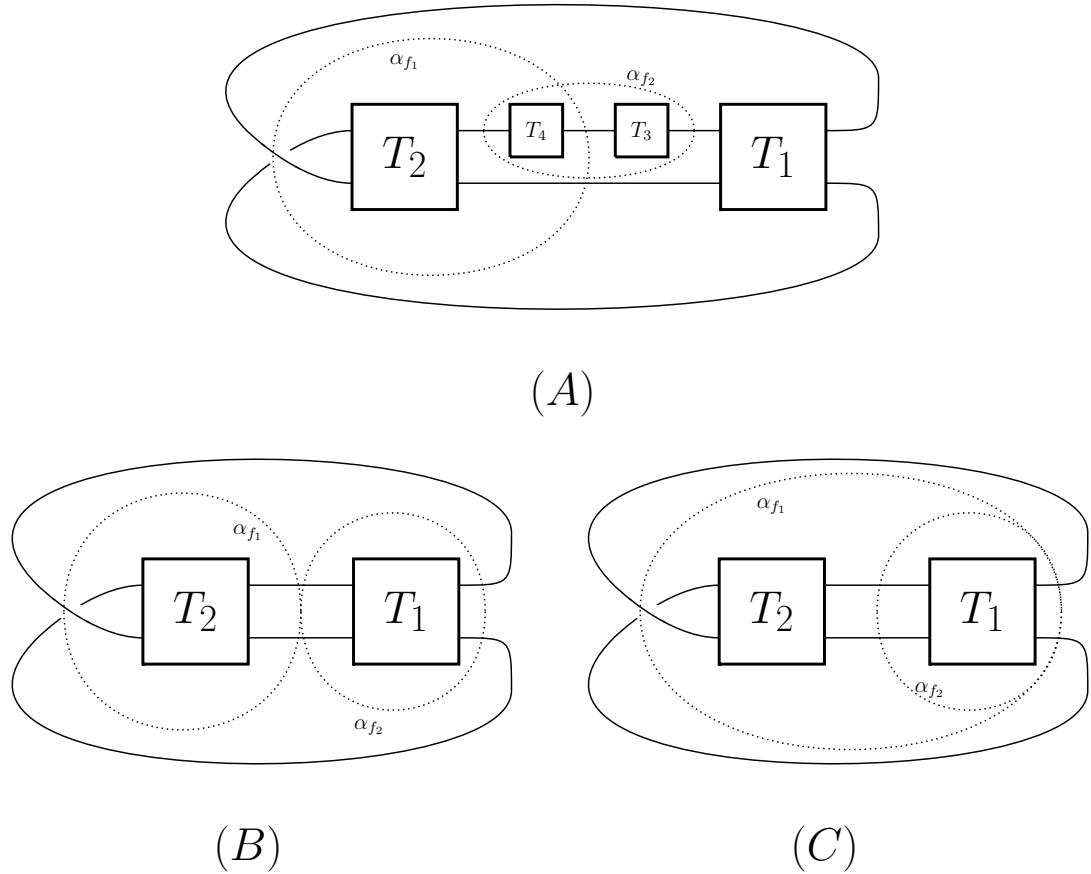

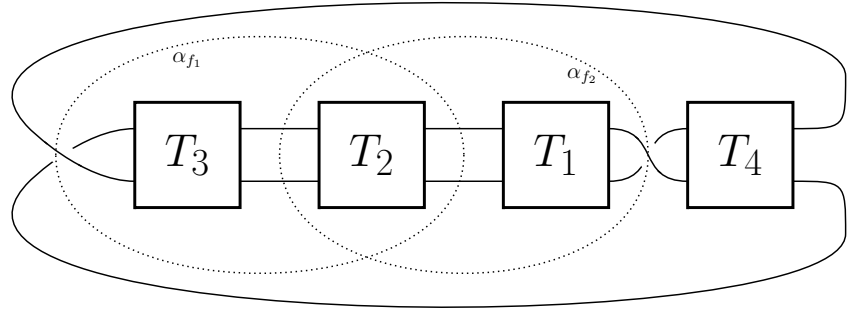
\begin{figure}
\scalebox{0.7}{
\begin{tikzpicture}
\begin{knot}[
clip width = 15, 
flip crossing = 1,
]
\strand[black , thick] (14.5,0) .. controls +(1,0) and +(0,1) .. (15.5,-1) .. controls +(0,-2) and +(-3,-3) .. (0,0) .. controls +(0,0) and +(-1,0) .. (2,1);
\strand[black , thick] (14.5,1) .. controls +(1,0) and +(0,-1) .. (15.5,2) .. controls +(0,2) and +(-3,3) .. (0,1) .. controls +(0,0) and +(-1,0) .. (2,0);
\strand[black , thick] (11,1) .. controls +(1,0) and +(-1,-0) .. (12.5,0);
\strand[black , thick] (11,0) .. controls +(1,0) and +(-1,0) .. (12.5,1);
\end{knot}
\draw[black, ultra thick] (2, -.53) -- (2,1.5) -- (4,1.5) -- (4,-.5) -- (2,-.5);
\draw[black, ultra thick] (5.5, -.53) -- (5.5,1.5) -- (7.5,1.5) -- (7.5,-.5) -- (5.5,-.5);
\draw[black, ultra thick] (12.5, -.53) -- (12.5,1.5) -- (14.5,1.5) -- (14.5,-.5) -- (12.5,-.5);
\draw[black, thick] (4,1) -- (5.5,1);
\draw[black, thick] (7.5,1) -- (9,1);
\draw[black, thick] (4,0) -- (5.5,0);
\draw[black, thick] (7.5,0) -- (9,0);
\draw[black, ultra thick] (9, -.53) -- (9,1.5) -- (11,1.5) -- (11,-.5) -- (9,-.5);

\draw[black, thick, dotted] (.5,.5) .. controls +(0,3) and +(0,3) .. (8.25,.5);
\draw[black, thick, dotted] (8.25,.5) .. controls +(0,-3) and +(0,-3) .. (.5,.5);

\draw[black, thick, dotted] (4.75,.5) .. controls +(0,3) and +(0,3) .. (11.75,.5);
\draw[black, thick, dotted] (11.75,.5) .. controls +(0,-3) and +(0,-3) .. (4.75,.5);

\node at (3,.5) {\huge{$T_3$}};
\node at (10,.5) {\huge{$T_1$}};
\node at (6.5,.5) {\huge{$T_2$}};
\node at (13.5,.5) {\huge{$T_4$}};

\node at (3, 2.3){$\alpha_{f_1}$};
\node at (10.5, 2){$\alpha_{f_2}$};
\end{tikzpicture}}
\caption{A possible configuration for two flypes $f_1: \lambda(D) \to \lambda(D')$ and $f_2:\lambda(D') \to \lambda(D)$ whose domains overlap. The shown diagram is $D$, $\alpha_{f_1}$ is the domain for $f_1$, and $\alpha_{f_2}$ is the domain for $f_2$.}
\label{fig:commute}
\end{figure}
\begin{definition}
Let $F = (f_2, f_1)$. Then the domains $\alpha_1$ and $\alpha_2$ of $f_1$ and $f_2$ are \emph{essentially disjoint} if $\alpha_1 \cap \alpha_2$ does not contain any crossing balls. Similarly, $\alpha_1$ and $\alpha_2$ are \emph{essentially nested} if for each crossing ball $B_1 \subset \alpha_1$ there is a crossing ball $B_2 \subset \alpha_2$ with $[B_1] = [B_2]$ or vice versa.
\end{definition}

\begin{proposition}
\label{prop:permute}
Let $F = (f_n, f_{n-1}, \dots, f_1)$ be a reduced composable collection of flypes. Then for all $i,j$, the domains of $f_i$ and $f_j$ are essentially disjoint or essentially nested. Furthermore, for any permutation $\phi$ of $\{1,2,\dots n\}$, there are flypes $f_{\phi(n)}', f_{\phi(n-1)}', \dots, f_{\phi(1)}'$ such that $F' = (f_{\phi(n)}', f_{\phi(n-1)}', \dots, f_{\phi(1)}')$ is equivalent to $F$, $F'$ is reduced, and for each $i$, $[c(f_i)] = [c(f_i')]$ and $[d(f_i)] = [d(f_i')]$.
\end{proposition}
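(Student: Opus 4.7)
The plan is to prove both claims by leveraging the minimality of the complexity function. For the first statement, I argue by contradiction: suppose some pair $f_i, f_j$ with $j < i$ have domains that are neither essentially disjoint nor essentially nested. By choosing a pair with $i - j$ minimal and analyzing how the intermediate flypes act on the relevant domains, I reduce to the adjacent case $F = (f_2, f_1)$. Lemma \ref{lemma:unique} then lets me work entirely diagrammatically: the domains $\alpha_1, \alpha_2$ become disks in $S^2$, and I need only understand how their boundary circles intersect on the projection sphere.

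I then split into cases based on whether $c(f_1) = d(f_2)$. If so, the three configurations of Figure \ref{fig:orientations} exhaust the possibilities: case (A) gives essentially disjoint domains, while cases (B) and (C) are essentially nested, so in each scenario I verify that the composition $f_2 \circ f_1$ can be rewritten as either a single flype or an isomorphism of realized diagrams, yielding a strictly smaller complexity and contradicting reducedness. If $c(f_1) \ne d(f_2)$ but the domains overlap in a non-nested way as in Figure \ref{fig:commute}, the plan is to replace $(f_2, f_1)$ by a pair $(f_1', f_2')$ where each new domain is restricted to a sub-disk of the original that still contains the relevant created and destroyed crossing balls; Lemma \ref{lemma:exists} guarantees the rewritten maps are honest flypes, and a configuration-by-configuration check of the complexity inequality completes the contradiction.

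For the second statement, the nested/disjoint structure established in part 1 gives the flypes a forest-like partial order, and any permutation $\phi$ factors into adjacent transpositions. Each transposition of $f_i, f_{i+1}$ is realized as follows: if their domains are essentially disjoint, the two flypes commute as homeomorphisms of $S^3$ and I simply set $f_i' = f_{i+1}$, $f_{i+1}' = f_i$; if nested with, say, $\alpha_{i+1} \subsetneq \alpha_i$, the outer flype can be moved past the inner one by replacing the inner flype's domain with its image under the outer flype. In both cases Lemmas \ref{lemma:unique} and \ref{lemma:exists} promote the diagrammatic description to an honest flype, the equivalence classes $[c(f_i)]$ and $[d(f_i)]$ are preserved by construction, and the complexity is unchanged so reducedness is inherited. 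The main obstacle is the complexity bookkeeping in part 1: I must verify in every overlap configuration that the modified collection strictly decreases $\mathrm{cx}(F)$, which requires carefully tracking how many crossings lie inside each domain before and after the rewrite, since shrinking a flype's domain can push crossings out of one region and into another.
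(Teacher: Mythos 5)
Your proposal is correct and takes essentially the same route as the paper: an adjacent-pair case analysis splitting on whether $c(f_{i-1}) = d(f_i)$ (the content of Lemmas \ref{lemma:combine} and \ref{lemma:commute}, driven by complexity minimality and the configurations of Figures \ref{fig:orientations} and \ref{fig:commute}), followed by reduction of general permutations to adjacent transpositions. The only difference is that you prove the disjoint/nested claim before the permutation claim, whereas the paper does the reverse --- and since your reduction of a non-adjacent bad pair to an adjacent one already requires commuting flypes past intermediate ones, the paper's ordering is the cleaner way to organize the same argument.
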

In order to prove this proposition, we first consider some specific cases.

\begin{lemma}
\label{lemma:combine}
If $f_1: \lambda(D_1) \to \lambda(D_2)$ and $f_2: \lambda(D_2) \to \lambda(D_3)$ are flypes such that $c(f_1) = d(f_2)$ and $(e^1_{c(f_2)},e^2_{c(f_2)}) \neq (e^1_{d(f_1)},e^2_{d(f_1)})$, then there exists a flype $f_{1,2}: \lambda(D_1) \to \lambda(D_3)$ with $f_2 \circ f_1 \cong f_{1,2}$ and cx$((f_{1,2})) <$ cx$((f_2, f_1))$. In particular, $(f_2, f_1)$ is not reduced.
\end{lemma}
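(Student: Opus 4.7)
My approach is to use Lemma \ref{lemma:exists} to recognize the composition $f_2 \circ f_1: \lambda(D_1) \to \lambda(D_3)$ as a single flype. The key observation is that the crossing $c(f_1) = d(f_2)$ lives only in the intermediate diagram $D_2$: it is created by $f_1$ and then immediately destroyed by $f_2$. Consequently, on the level of underlying diagrams, $D_3$ differs from $D_1$ by destroying only the vertex $d(f_1)$ and creating only the vertex $c(f_2)$, which is exactly the combinatorial signature of a single flype.

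The first step is to produce a diagrammatic flype $\hat{f}: D_1 \to D_3$ realizing this signature, with $d(\hat{f}) = d(f_1)$ and $c(\hat{f}) = c(f_2)$. The edge pair $(e^1_{c(f_2)}, e^2_{c(f_2)}) \subset D_2$ where $c(f_2)$ is created pulls back through the diagrammatic flype $f_1$ to an edge pair $(e^1, e^2) \subset D_1$, and the hypothesis $(e^1_{c(f_2)}, e^2_{c(f_2)}) \neq (e^1_{d(f_1)}, e^2_{d(f_1)})$ guarantees that $(e^1, e^2)$ is well-defined and distinct from the edges adjacent to $d(f_1)$. Using the three configurations of Figure \ref{fig:orientations} as templates, I would construct in each case a simple closed curve $\gamma \subset S^2$ meeting $D_1$ exactly in $d(f_1)$ and $(e^1, e^2)$; this defines the desired diagrammatic flype $\hat{f}$. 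Since $f_2 \circ f_1$ already induces $\hat{f}$ on underlying diagrams, Lemma \ref{lemma:exists} promotes it to a flype $f_{1,2}: \lambda(D_1) \to \lambda(D_3)$ with $f_{1,2} \cong f_2 \circ f_1$.

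For the complexity, write $x_i$ for the number of crossings in the domain of $f_i$, so that $\mathrm{cx}((f_2, f_1)) = 2 + x_1 + x_2$. In each configuration of Figure \ref{fig:orientations}, the domain of $f_{1,2}$ is contained in $\alpha_{f_1}$ together with the pullback of $\alpha_{f_2}$; since $d(f_2) = c(f_1)$ contributes a crossing to $\alpha_{f_2}$ in $D_2$ but does not exist as a crossing in $D_1$, a direct count gives $x_{1,2} \leq x_1 + x_2 - 1$, and hence $\mathrm{cx}((f_{1,2})) = 1 + x_{1,2} \leq x_1 + x_2 < \mathrm{cx}((f_2, f_1))$. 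Thus $(f_2, f_1)$ is not reduced. I expect the main obstacle to be precisely this case analysis: the three configurations differ in how $\alpha_{f_2} \subset D_2$ overlaps with $f_1(\alpha_{f_1})$, and each demands a tailored choice of $\gamma$ together with careful bookkeeping of how the crossings inside the moving tangle of $f_1$ are redistributed by $f_2$.
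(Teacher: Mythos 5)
Your overall strategy---reduce to diagrammatic flypes, analyze the three configurations of Figure \ref{fig:orientations}, and count crossing balls in the domains---is the same as the paper's, and your complexity estimate $x_{1,2} \le x_1 + x_2 - 1$ is correct in the configurations that actually arise. But there is a genuine gap in the case analysis. You assert that in each of the three configurations one can choose a curve $\gamma$ through $d(f_1)$ and the pulled-back edge pair so that $f_2 \circ f_1$ induces the resulting diagrammatic flype; in configuration (A) this fails. There the composition flips $T_2$ once, flips $T_4$ twice (hence not at all), and flips $T_3$ once, and since $T_4$ sits between $T_2$ and $T_3$, no single flype---which must flip everything inside its domain exactly once---agrees with $f_2 \circ f_1$ on $N(L)\cap S^2$. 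So Lemma \ref{lemma:exists} does not apply, and your premise that ``destroying only $d(f_1)$ and creating only $c(f_2)$ is exactly the combinatorial signature of a single flype'' is too weak: a flype also prescribes how the intervening tangle is reflected, not merely which vertex disappears and which edge pair acquires one.

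The missing idea is primeness, which is a standing hypothesis in this paper. In configuration (A) the subtangles $T_3$ and $T_4$ each meet the rest of the diagram in only two points, so if either is non-trivial it is a connect summand and $L$ is not prime; hence configuration (A) cannot occur at all. This is how the paper disposes of it---by exclusion, not by constructing a $\gamma$. Once (A) is ruled out, your construction and bookkeeping go through in configurations (B) and (C) (where the combined domain contains $T_1$, $T_2$, and $d(f_1)$, respectively $T_2$ and $d(f_1)$), and your complexity count completes the argument. You should make the appeal to primeness explicit, since without it the lemma as stated is false.
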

\begin{proof}
By Lemmas \ref{lemma:unique} and \ref{lemma:exists}, it is enough to consider diagrammatic flypes. There are three possible configurations for $f_2$ relative to $f_1$ (see Figure \ref{fig:orientations}). Observe however that configuration (A) is impossible, since if $T_3$ or $T_4$ is a non-trivial tangle then $L$ cannot be prime. 

In configuration (B) the composition will flip both tangles $T_1$ and $T_2$ over once, so that $f_{1,2}$ can be defined by having domain a ball containing $T_1$, $T_2$, and $d(f_1)$. Similarly, in configuration (C) the composition will flip the tangle $T_1$ over twice so that $f_{1,2}$ can be defined by having domain a ball containing $T_2$ and $d(f_1)$. In both cases it is clear that cx($(f_{1,2})) <$ cx$((f_2, f_1))$. 
\end{proof}

\begin{lemma}
\label{lemma:commute}
If $f_1: \lambda(D_1) \to \lambda(D_2)$ and $f_2: \lambda(D_2) \to \lambda(D_3)$ are flypes such that $c(f_1) \neq d(f_2)$ and $(f_2, f_1)$ is reduced, then there exists a pair of flypes $f_2': \lambda(D_1) \to \lambda(D_2')$ and $f_1': \lambda(D_2') \to \lambda(D_3)$ such that 
\begin{enumerate}
\item $f_2 \circ f_1 \cong f_1' \circ f_2'$, and
\item for $i \in\{1,2\}$, $[d(f_i)] = [d(f_i')]$ and $[c(f_i)] = [c(f_i')]$.
\end{enumerate}
\end{lemma}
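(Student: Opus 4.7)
The plan is to work with the diagrammatic data of flypes. By Lemmas \ref{lemma:unique} and \ref{lemma:exists}, each flype $f_i$ is determined up to isomorphism of realized diagrams by a simple closed curve $\gamma_i \subset S^2$ meeting its underlying diagram in two edges and one vertex. I would extract $\gamma_1$ for $f_1$ on $D_1$ and $\gamma_2$ for $f_2$ on $D_2$, then transport them to build candidate curves for $f_2'$ on $D_1$ and $f_1'$ on the intermediate diagram $D_2'$.

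Concretely, I would pull $\gamma_2$ back through $f_1$ to obtain a curve $\gamma_2' \subset S^2$. The condition $c(f_1) \neq d(f_2)$ is exactly what ensures this works: the only vertex of $D_2$ whose preimage under $f_1^{-1}$ fails to be a vertex of $D_1$ is $c(f_1)$ (which splits back into the edge pair $(e^1_{d(f_1)}, e^2_{d(f_1)})$), and by hypothesis $d(f_2)$ is not this vertex, so $f_1^{-1}(d(f_2))$ is again a single vertex of $D_1$. After a small isotopy, $\gamma_2'$ should meet $D_1$ in that vertex and two edges, defining a diagrammatic flype $f_2': D_1 \to D_2'$ with $[d(f_2')] = [d(f_2)]$ and $[c(f_2')] = [c(f_2)]$. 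An analogous construction, pushing $\gamma_1$ forward through $f_2'$, yields $f_1': D_2' \to D_3$ satisfying $[d(f_1')] = [d(f_1)]$ and $[c(f_1')] = [c(f_1)]$. I would finish by checking that $f_1' \circ f_2'$ and $f_2 \circ f_1$ agree on every crossing ball and on $N(L) \cap S^2$, then invoke Lemma \ref{lemma:unique} to conclude equivalence.

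The main obstacle I expect is arranging $\gamma_2'$ to meet $D_1$ in exactly two edges and one vertex. When the domains of $f_1$ and $f_2$ are essentially disjoint or essentially nested in $D_2$, $\gamma_2'$ inherits the correct intersection pattern from $\gamma_2$ with no further work. However, in the overlap configuration illustrated by Figure \ref{fig:commute}, $\gamma_2'$ may acquire extra transverse intersections with $D_1$ inside the domain $\alpha_1$ of $f_1$, and removing these requires either an isotopy of $\gamma_2'$ across the tangle that $f_1$ flips, or replacement of $\gamma_2$ by the curve bounding the dual flype whose domain is the complementary disk in $S^2$. Making this step precise will rely on the reducedness hypothesis on $(f_2, f_1)$ to rule out configurations that would otherwise admit a complexity-reducing simplification in the spirit of Lemma \ref{lemma:combine}, together with the primality of $L$ to constrain the structure of the tangle in the overlap region.
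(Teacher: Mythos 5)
Your proposal is correct and follows essentially the same route as the paper: reduce to diagrammatic flypes via Lemmas \ref{lemma:unique} and \ref{lemma:exists}, transport the flype data directly when the domains are essentially disjoint or essentially nested, and use the reducedness hypothesis to dispose of the overlapping configuration of Figure \ref{fig:commute}. The paper makes that last step concrete by exhibiting the lower-complexity pair (domains containing $T_1$ and $T_3$) in the overlap case, which is exactly the ``complexity-reducing simplification'' you anticipated.
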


\begin{proof}
Again, by Lemmas \ref{lemma:unique} and \ref{lemma:exists} it is enough to prove this lemma diagrammatically on the underlying graphs. On one hand, consider the case where either the domain for $f_1$ is essentially nested in the domain for $f_2$ or else the domains for $f_1$ and $f_2$ are essentially disjoint. Define $f_1'$ and $f_2'$ to have the same domains as $f_1$ and $f_2$ respectively. Then it is clear that $d(f_1) = (f_2')^{-1}(d(f_1'))$ and $c(f_2) = f_1'(c(f_2'))$ so that $[d(f_1)] = [d(f_1')]$ and $[c(f_2)] = [c(f_2')]$, and similarly for $c(f_1)$ and $d(f_2)$.

On the other hand, suppose that the domains are neither essentially disjoint nor essentially nested. Then we have the configuration shown in Figure \ref{fig:commute}. In this case, define $f_1'$ as the flype with domain containing $T_1$ and define $f_2'$ as the flype with domain containing $T_3$. We see that $f_2 \circ f_1 \cong f_1' \circ f_2'$ but cx$((f_1', f_2')) < $ cx($(f_2, f_1))$, so $(f_2, f_1)$ is not reduced.
\end{proof}

\begin{proof}[Proof of Proposition \ref{prop:permute}]
We begin by proving the claim about permutations. Since permutations are generated by transpositions between adjacent elements, it is enough to replace $f_i \circ f_{i-1}$ with $f_{i-1}'\circ f_i'$. Note that since $F$ is reduced, so is $(f_i, f_{i-1})$.

On one hand, suppose that $c(f_{i-1}) \neq d(f_i)$. Then we can apply Lemma \ref{lemma:commute} directly, and we are done. On the other hand, suppose that $c(f_{i-1}) = d(f_i)$. Then we have two cases. If $(e^1_{c(f_i)},e^2_{c(f_i)}) \neq (e^1_{d(f_{i-1})},e^2_{d(f_{i-1})})$, we can apply Lemma \ref{lemma:combine} to see that $(f_i, f_{i-1})$ is not reduced. If $(e^1_{c(f_i)},e^2_{c(f_i)}) = (e^1_{d(f_{i-1})},e^2_{d(f_{i-1})})$, then we have two possible configurations for $f_i$ and $f_{i-1}$. Either the domains of $f_i$ and $f_{i-1}$ are essentially disjoint apart from the crossing $c(f_{i-1})$, or else they are essentially equal (in that they are each essentially nested in the other). In both cases, $f_i \circ f_{i-1}$ is equivalent to an isomorphism of realized diagrams, and hence $(f_i, f_{i-1})$ is not reduced.

We have also seen that the only reduced configurations for $(f_i, f_{i-1})$ (and also $(f_{i-1}', f_i')$) have essentially nested or essentially disjoint domains. By applying the above permutation argument, we can then see that this must be true for any pair $f_i$ and $f_j$.
\end{proof}

\begin{definition}
Consider a reduced collection of composable flypes $F = (f_n, f_{n-1},\dots, f_1)$ and let $\alpha_i$ be the domain of $f_i$. A flype $f_i$ is \emph{innermost in} $F$ (or simply \emph{innermost}) if for each $j \neq i$, the domain $\alpha_i$ is either essentially disjoint from $\alpha_j$ or essentially nested in $\alpha_j$.
\end{definition}

\begin{lemma}
\label{lemma:innermost}
In any reduced collection of composable flypes $F$ with cx$(F) > 0$, there is an innermost flype.
\end{lemma}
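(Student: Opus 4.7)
The plan is to choose $f_i \in F$ minimizing the number of crossing balls $x_i$ in its domain $\alpha_i$, and to argue that any such $f_i$ must be innermost. For any $j \neq i$, Proposition \ref{prop:permute} says that $\alpha_i$ and $\alpha_j$ are either essentially disjoint -- in which case the innermost condition holds for this $j$ immediately -- or essentially nested. In the nested case the definition of essentially nested gives one of two alternatives: either (a) each crossing ball in $\alpha_i$ has an $F$-equivalent in $\alpha_j$, or (b) each crossing ball in $\alpha_j$ has an $F$-equivalent in $\alpha_i$. Alternative (a) is exactly the statement that $\alpha_i$ is essentially nested in $\alpha_j$, so again the innermost condition holds.

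It remains to show that alternative (b) forces (a). The key observation is that within a single realized diagram $\lambda(D_k)$, distinct crossing balls lie in distinct $F$-equivalence classes. This is because each flype $f_k$ destroys one crossing $d(f_k)$ and creates one crossing $c(f_k)$ and acts as the identity on the other crossings, so the identifications $[B] = [f_k(B)]$ for $B$ in $D_k$ pair the crossings of $D_k$ injectively with the crossings of $D_{k+1}$; chasing along the sequence of flypes, any two distinct crossings of $\lambda(D_k)$ remain in different equivalence classes. Granting this, in case (b) the assignment sending each crossing ball of $\alpha_j \cap \lambda(D_j)$ to an $F$-equivalent crossing ball of $\alpha_i \cap \lambda(D_i)$ is injective, whence $x_j \leq x_i$. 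Minimality of $x_i$ gives $x_j = x_i$, so the injection is a bijection, every crossing ball in $\alpha_i \cap \lambda(D_i)$ is in the image, and (a) holds as well.

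The main technical point I expect to have to verify carefully is the singleton property of equivalence classes within a single diagram $\lambda(D_k)$; everything else is a direct counting argument on top of Proposition \ref{prop:permute}. A secondary point worth noting is that this argument handles the degenerate situation where $\alpha_i$ and $\alpha_j$ are ``essentially equal'' without any special casing: essential equality means both (a) and (b) hold, so in particular $\alpha_i$ is essentially nested in $\alpha_j$ and the innermost condition at $j$ is still satisfied.
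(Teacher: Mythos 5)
Your proof is correct, but it takes a genuinely different route from the paper's. The paper simply observes that ``essentially nested in'' defines a (pre)order on the finite set of flypes in $F$ --- transitivity follows from transitivity of the equivalence relation on crossing balls --- and takes a minimal element, which is innermost by Proposition \ref{prop:permute}; no counting is needed. You instead exhibit a minimal element concretely by minimizing the crossing-ball count $x_i$ of the domain, which requires the extra observation that each $F$-equivalence class meets each diagram $\lambda(D_k)$ in at most one crossing ball. That observation is correct for the reason you sketch: each flype induces an injection from the surviving crossing balls of $D_k$ to those of $D_{k+1}$, so each equivalence class is a chain meeting each layer at most once. The one caveat is that this uses the diagrams $D_1,\dots,D_{n+1}$ being treated as formally distinct layers even when, as in the application to an involution, $\lambda(D_{n+1})=\lambda(D_1)$; otherwise a chain could in principle wrap around and return to the same diagram at a different crossing. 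Under the intended reading of the equivalence relation this is fine. What your approach buys is an explicit quantitative certificate of innermostness and, as you note, a uniform treatment of the ``essentially equal'' case; what the paper's buys is brevity, since it never needs injectivity of the crossing-ball correspondence.
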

\begin{proof}
First observe that since cx$(F) > 0$, $F$ contains at least one flype. Then by Proposition \ref{prop:permute} any pair of flypes in $F$ have essentially disjoint or essentially nested domains. This forms a partial order by declaring $f_i < f_j$ if the domain of $f_i$ is essentially nested in the domain of $f_j$. Any minimal element under this partial order is innermost.
\end{proof}

\section{A Classification of Involutions}\label{sec:main}
In this section, we will use flypes to prove the classification of involutions from alternating diagrams via the basic involutions from Section \ref{sec:basic}. Before stating the main theorem, we give a definition.
\begin{definition}
Consider an involution $\tau$ on a diagram $D$ equal to the composition $g\circ f$ of a flype $f$ and another map $g$. We say that $f$ is \emph{type A} if there exists a 4-ended tangle $T' \subset D$ containing the domain of $f$ such that $\tau$ restricted to $T'$ is given by $h \circ f$, where $h$ is an intravergent $\pi$ rotation of $T'$. See the right side of Figure \ref{fig:transform1} where $f$ is the flype on $T$. Similarly we say that $f$ is \emph{type B} if the same is true, but $h$ is a transvergent rotation. See the right side of Figure \ref{fig:transform2}.
\end{definition}
Consider an intravergent 4-ended tangle diagram with a central 4-ended subtangle consisting of two arcs with no crossings (see the left side of Figure \ref{fig:transform1}). Observe that by replacing this central subtangle with a crossing and a transvergent tangle (see the right side of Figure \ref{fig:transform1}), we get a new tangle with an involution given by the composition of a flype with an intravergent rotation. A similar replacement can be done for a transvergent tangle by replacing a trivial central tangle with an intravergent tangle and a crossing (see Figure \ref{fig:transform2}).
\begin{theorem}\label{thm:main}
Let $L$ be a prime alternating non-split link in $S^3$ and $\tau$ be an involution on $S^3$ preserving $L$. Then there is a reduced alternating diagram for $(L, \tau)$ which is either intravergent, transvergent, freely periodic, or constructed from an intravergent or a transvergent diagram by a performing a (finite) sequence of replacements as shown in Figure \ref{fig:transform1} and Figure \ref{fig:transform2}.
\begin{figure}
\scalebox{1.3}{\includegraphics{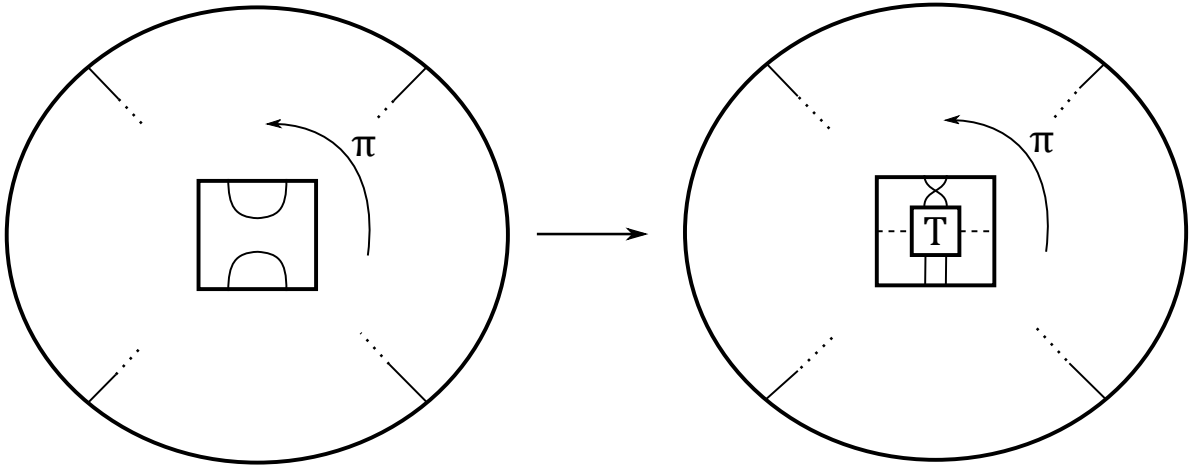}}
\caption{A transformation to create an alternating 4-ended tangle with a new involution from an intravergent alternating 4-ended tangle. The tangle on the right is constructed by replacing the inner crossingless tangle as shown. Here $T$ must be alternating and transvergent around the dashed axis, and the intersecting arcs shown should be a crossing of whichever type makes the diagram alternating. The involution on the right is given by performing a flype on $T$, and then rotating the entire diagram by $\pi$ within the plane of the diagram.}
\label{fig:transform1}
\end{figure}

\begin{figure}
\scalebox{1.3}{\includegraphics{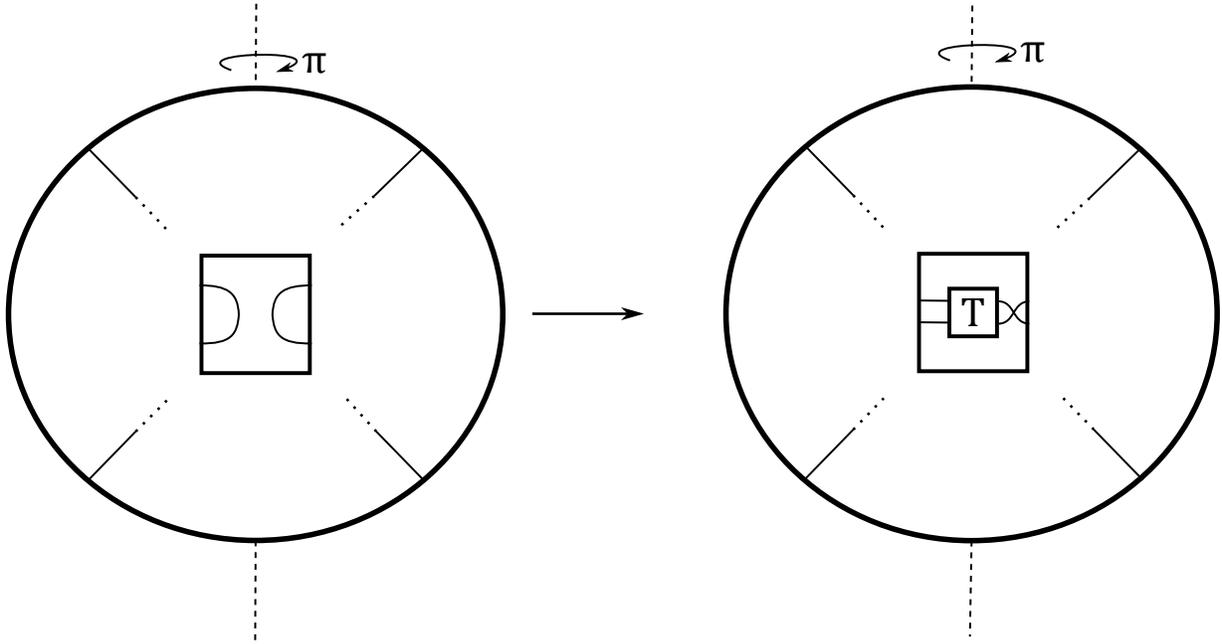}}
\caption{Another transformation from one alternating 4-ended tangle with an involution to another. On the left is given a transvergent alternating 4-ended tangle with a crossingless 4-ended central subtangle. The diagram on the right is constructed by replacing the inner crossingless tangle as shown. Here $T$ must be alternating and intravergent, and the intersecting arcs shown should be a crossing of whichever type makes the diagram alternating. The involution on the right is then given by performing a flype on $T$, and then rotating the entire diagram by $\pi$ around the dashed axis.}
\label{fig:transform2}
\end{figure}
\end{theorem}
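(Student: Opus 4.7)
The plan is to invoke Theorem \ref{thm:mt} to realize $\tau$ as a composable collection of flypes $F = (f_n, \dots, f_1)$ from $\lambda(D)$ to itself for a reduced alternating diagram $D$, choose $F$ to be reduced, and then induct on the complexity $\text{cx}(F)$. For the base case $\text{cx}(F)=0$, the map $\tau = F_\circ$ is itself an isomorphism of realized diagrams, so Lemma \ref{lemma:floop} yields immediately that $D$ is intravergent, transvergent, or freely periodic, matching the three ``unbuilt'' cases of the theorem.

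For the inductive step, assume $\text{cx}(F) > 0$. By Lemma \ref{lemma:innermost}, $F$ contains an innermost flype $f$ with domain $\alpha$, and by Proposition \ref{prop:permute} I may rearrange $F$ so that $f = f_1$ (the spatial innermost property is preserved by these permutations, since equivalence classes of $c$ and $d$ are preserved). Then $\alpha$ sits inside $D$, so I can compare $\alpha$ with its image $\tau(\alpha)$ via the conjugate flype $f' = \tau \circ f \circ \tau^{-1}$, whose domain is $\tau(\alpha)$. Because $f$ is innermost and by Proposition \ref{prop:permute} all flype domains in a reduced collection are pairwise essentially nested or essentially disjoint, $\tau(\alpha)$ is essentially equal to $\alpha$ or essentially disjoint from it.

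The core case is $\tau(\alpha) = \alpha$: then $\tau$ restricts to an involution $\tau|_{\alpha}$ of the $4$-ended tangle realized diagram in $\alpha$. Writing $\tau|_\alpha = h \circ f$ with $h$ the remaining tangle automorphism, the tangle version of Lemma \ref{lemma:floop} forces $h$ to be intravergent or transvergent (the freely periodic case is excluded for tangles). This places the interior of $\alpha$ in exactly the right-hand form of Figure \ref{fig:transform1} (type A) or Figure \ref{fig:transform2} (type B). I then ``undo'' the replacement: replacing the inside of $\alpha$ by the left-hand side produces a new alternating diagram $D'$ and an involution $\tau'$ on $D'$ represented by a reduced collection of flypes of strictly smaller complexity, to which induction applies. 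In the disjoint case $\tau(\alpha)\cap\alpha = \emptyset$, the same analysis is performed simultaneously on the symmetric pair $\alpha, \tau(\alpha)$ as two disjoint replacements. Re-inserting the undone replacement(s) into the diagram produced by the inductive hypothesis gives the required construction.

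The main obstacle is verifying that undoing the replacement genuinely strictly decreases the complexity of the minimal flype representation of the resulting involution, rather than $f$ (or its partner) re-emerging elsewhere in a different reduced expression for $\tau'$. Controlling this requires careful use of Lemmas \ref{lemma:combine} and \ref{lemma:commute} to analyze how the innermost $f$ interacts with the remainder $(f_n,\dots,f_2)$, together with the primeness of $L$ to exclude the degenerate inner-tangle configurations (as in the proof of Lemma \ref{lemma:combine}) that would otherwise spoil the type A or type B local picture.
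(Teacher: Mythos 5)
Your overall architecture --- realize $\tau$ as a reduced composable collection of flypes via Theorem \ref{thm:mt}, locate an innermost flype, identify its domain as a type A or type B tangle, undo the replacement, and induct --- is the same as the paper's. However, there is a genuine gap at the crux of the argument: you never establish that the remainder of the involution carries the innermost flype's tangle back to itself. You assert that $\tau(\alpha)$ is ``essentially equal to $\alpha$ or essentially disjoint from it'' by appealing to Proposition \ref{prop:permute}, but that proposition compares domains of flypes belonging to the \emph{same} reduced collection $F$; the conjugate $f' = \tau\circ f\circ\tau^{-1}$ is not a member of $F$ (it lives in a different reduced expression for $\tau$), so the proposition says nothing about how $\tau(\alpha)$ sits relative to $\alpha$. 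Moreover, even granting the dichotomy, the disjoint branch cannot be dispatched as you suggest: if $\tau$ swapped two disjoint copies of an \emph{asymmetric} tangle, re-inserting them would not be a type A or type B replacement (those require the inserted tangle $T$ to be individually intravergent or transvergent), so the theorem's conclusion would fail. The disjoint case must be shown to be impossible, not handled.

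The paper closes exactly this gap with Lemma \ref{lemma:orbit}, which you do not invoke. Writing $F_\circ = g\circ f_1$ with $f_1$ innermost, one considers $F^2$, which is isotopic to the identity; by hyperbolicity of prime alternating non-split non-torus links and Mostow rigidity, $F^2_\circ$ \emph{is} the identity, so the crossing $c(f_{(1,1)})$ created by the first iteration of $f_1$ must be destroyed by some later flype, and the reducedness machinery (Proposition \ref{prop:permute} together with Lemma \ref{lemma:combine}) forces that flype to be $f_{(1,2)}$ and nothing else. This is what shows that $g$ carries $c(f_1)$ back to $d(f_1)$, hence fixes the tangle $T$ set-wise and (by innermostness) diagrammatically, at which point Lemma \ref{lemma:floop} applies to $g|_T$ and the type A/B structure follows. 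Without this orbit control your ``core case'' is an unproven hypothesis rather than a conclusion. Your closing paragraph correctly flags that something must be checked about how $f$ interacts with $(f_n,\dots,f_2)$, but the missing ingredient is precisely this $\tau^2=\mathrm{id}$/orbit argument, not merely the complexity bookkeeping.
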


Before proving this theorem we will give an illustrative example and prove some interesting corollaries.

\begin{example}
Consider Figure \ref{fig:example}. Notice that taking the tangle in the exterior of the $\beta$ box and connecting the loose ends with a trivial tangle (either way) gives an intravergent diagram, and that the interior of the $\alpha$ box is a transvergent diagram, so that the entire diagram is an example of the transformation from Figure \ref{fig:transform1}. In particular, an involution on this knot is given by performing a flype on the $\alpha$ box, then rotating the entire diagram by $\pi$. This diagram as a whole however, is neither transvergent nor intravergent. 
\begin{figure}
\scalebox{.75}{\includegraphics{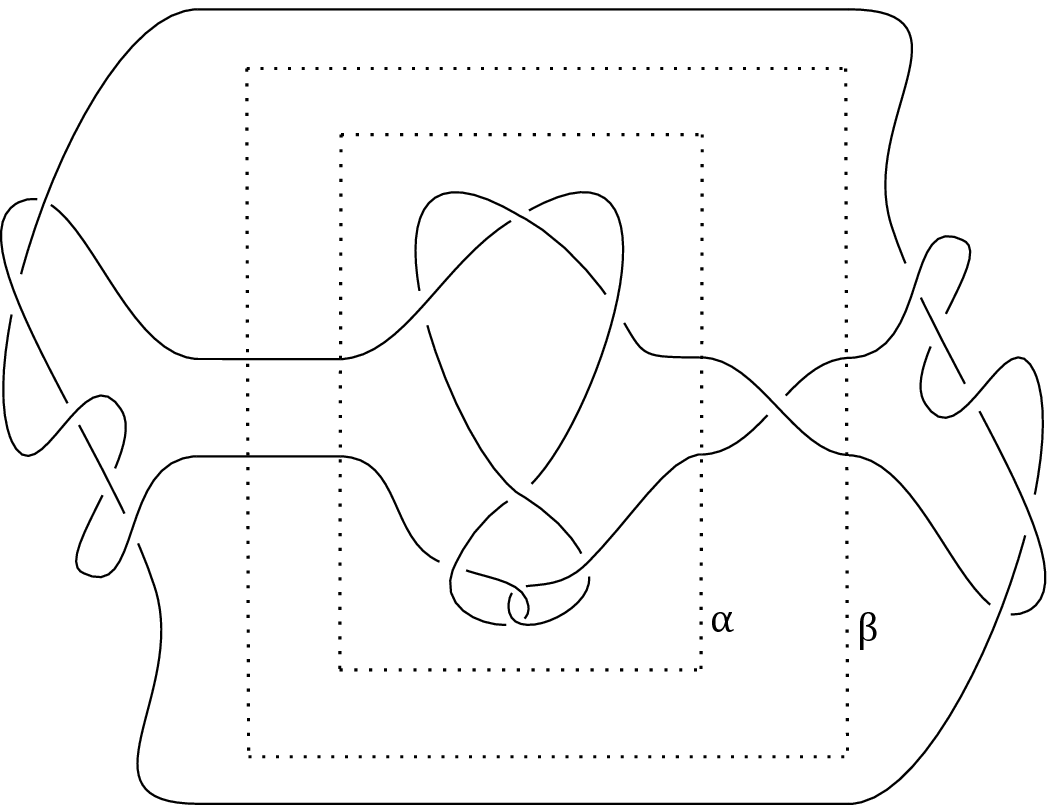}}
\scalebox{.75}{\includegraphics{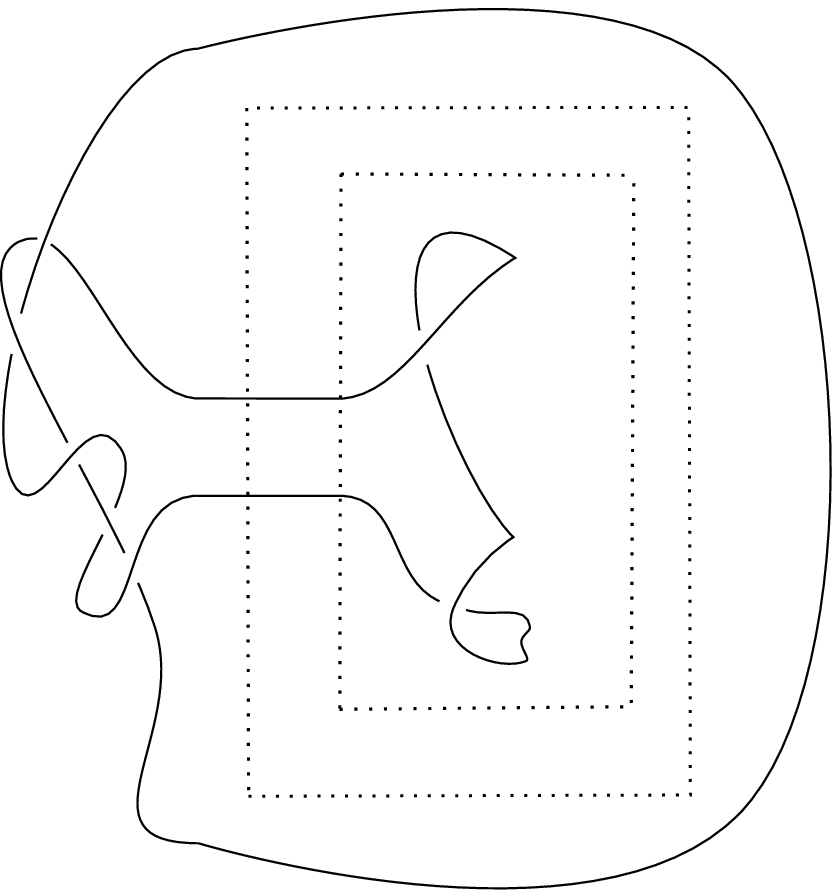}}
\caption{A reduced alternating diagram of a knot with a non-basic periodic involution (left) and its quotient (right).}
\label{fig:example}
\end{figure}
\end{example}

\begin{corollary}\label{cor:quotient}
The quotient of an alternating 2-periodic prime non-split link is alternating. 
\end{corollary}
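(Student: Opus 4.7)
The plan is to apply Theorem~\ref{thm:main} to $(L,\tau)$ and then explicitly construct the quotient diagram in each of the resulting cases, checking that it is alternating. Since $\tau$ is a period, its fixed set is an unknotted $S^1$ disjoint from $L$; in particular $\tau$ has fixed points on $S^3$, so the freely periodic case (whose diagrammatic involution is fixed-point-free on $S^3$) is excluded. Hence the reduced alternating diagram produced by Theorem~\ref{thm:main} is intravergent, transvergent, or built from such a basic diagram by a nested/disjoint sequence of replacements from Figures~\ref{fig:transform1} and \ref{fig:transform2}.

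Next, I would handle the two base cases directly. For an intravergent period the involution acts on $S^2$ by rotation around two fixed points, neither of which lies on $L$ (because $\tau$ is a period and thus misses $L$). The branched cover $S^2 \to S^2/\tau \cong S^2$ is a local diffeomorphism away from the two branch points, and since alternation is a local condition at each crossing, the image of the alternating diagram $D$ is itself an alternating diagram for the quotient link. For a transvergent period, the axis is a great circle $\gamma$ of $S^2$ disjoint from $L$; one of the two hemispheres bounded by $\gamma$ serves as a fundamental domain for $\tau$ on $S^2$, and gluing this hemisphere to itself along $\gamma$ according to the quotient identification in the ambient $S^3$ yields a diagram on $S^2$ that inherits alternation from $D$.

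For the constructed case I would induct on the number of applications of the Figures~\ref{fig:transform1}/\ref{fig:transform2} replacements. Because the replacements are required to be nested or disjoint, one can always peel off an innermost one. The involution on the outer tangle in an innermost replacement is the composition of a flype with a basic rotation, so after quotienting, the effect is a local substitution of a crossingless central subtangle by a new alternating tangle (the quotient of the inner transvergent or intravergent piece, produced by induction). The substitution in the quotient exactly mirrors the replacement in Figure~\ref{fig:transform1} or \ref{fig:transform2}, and since those replacements were designed to keep the global diagram alternating, the corresponding replacement in the quotient is also alternating.

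The main obstacle will be the transvergent base case together with the local bookkeeping in the inductive step: one must verify that the two ``boundary'' crossings introduced by the transformations of Figures~\ref{fig:transform1}/\ref{fig:transform2} --- which interpolate between a transvergent sub-tangle and the surrounding intravergent structure (or vice versa) --- have the correct under/over pattern in the quotient diagram, i.e.\ match the alternation of the remaining crossings inherited from the quotient of the larger diagram. This ultimately reduces to a direct check against the alternation conditions built into Figures~\ref{fig:transform1} and \ref{fig:transform2}, using the primeness and non-splitness of $L$ (which were also used in Lemma~\ref{lemma:combine}) to rule out degenerate configurations.
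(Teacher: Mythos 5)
Your proposal follows essentially the same route as the paper: exclude the freely periodic case, handle the intravergent and transvergent base cases directly, and induct on the number of Figure~\ref{fig:transform1}/\ref{fig:transform2} replacements, peeling off an innermost one and observing that the quotient of $D'$ is obtained from the quotients of the outer diagram and the inner tangle $T$ by a local alternation-preserving substitution. The one point you flag as the remaining obstacle --- the under/over pattern of the crossing introduced by the flype --- is resolved in the paper not by a case check but by noting that this crossing either becomes nugatory in the quotient (if the fundamental domain contains it) or can be avoided entirely by choosing a fundamental domain that excludes it, so it never threatens alternation.
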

\begin{proof}
We induct on the number of type A and type B replacements needed to construct the diagram guaranteed by Theorem \ref{thm:main}. For the base case we have an intravergent or a transvergent diagram which clearly has an alternating quotient diagram. Now suppose that we have a diagram $D$ as given in Theorem \ref{thm:main} and we perform a replacement of type A (see Figure \ref{fig:transform1}) or type B (see Figure \ref{fig:transform2}) to obtain a new diagram $D'$. The new transvergent or intravergent tangle $T$ then has an alternating quotient diagram, and by inductive assumption the diagram $D$ has an alternating quotient diagram. Furthermore, the quotient of $D'$ is the boundary sum of the quotients of $D$ and $T$ in the way that preserves the alternating condition. The extra crossing from the flype (which is not part of either $D$ or $T$) will become nugatory in the quotient if you choose a fundamental domain which contains it, or it is possible to choose a fundamental domain avoiding the extra crossing as we have done in Figure \ref{fig:example}. In any case the resulting quotient diagram of $D'$ is alternating.
\end{proof}
We can now prove Theorem \ref{thm:quotient}.
\begin{proof}[Proof of Theorem \ref{thm:quotient}]
We first observe that by \cite{composite} a $p$-period $\tau$ on an alternating (not necessarily prime) link $L$ respects a direct sum decomposition of $L$ into prime links. Specifically, for each prime component $C$, $\tau$ either induces a $p$-period on $C$ or else permutes $p$ isomorphic copies of $C$. Furthermore, the quotient of $L$ is a connect sum of components and quotients of components of $L$. Thus for $p$ prime, we immediately have that the quotient of $L$ is alternating by either Corollary \ref{cor:quotient} or one of \cite[Corollary 1.3]{B} or \cite{CH}. 

For the general case, we induct on the number of prime factors of $p$. If $p = q \cdot r$ with $r$ prime, then $L$ quotiented by the $\Z/q\Z$ action is a non-split $r$-periodic alternating link $L'$ by the inductive assumption. Furthermore $r$ is prime, so the quotient of $L'$ is also alternating.  
\end{proof}
\begin{corollary}\label{cor:free}
A freely 2-periodic alternating prime non-split link must have an even number of components. In particular, there are no freely 2-periodic alternating prime knots. 
\end{corollary}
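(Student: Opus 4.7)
My plan is to apply Theorem \ref{thm:main} to restrict the allowed diagrams for $(L, \tau)$, rule out all but the freely periodic type, and then extract the parity of the component count from Figure \ref{fig:free}.

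By Theorem \ref{thm:main}, the pair $(L,\tau)$ admits a reduced alternating diagram $D$ of one of four types: intravergent, transvergent, freely periodic, or built from an intravergent or transvergent diagram by a finite sequence of type A or type B replacements. Since the free involution $\tau$ has empty fixed set on $S^3$, the intravergent and transvergent types are ruled out immediately (they realize $\tau$ as a rotation around an unknotted $S^1 \subset S^3$, with nonempty fixed set). For diagrams built by type A or type B replacements, each replacement modifies the involution only inside the bounded ball of the corresponding 4-ended subtangle, and outside all such balls the involution still agrees with the original intravergent or transvergent rotation. The fixed axis of that rotation is an unknotted $S^1 \subset S^3$ passing through $\infty$, and no finite union of bounded balls can cover this $S^1$. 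Hence a nonempty subarc of the axis remains fixed by $\tau$, contradicting freeness. Therefore $D$ must be a freely periodic diagram as in Figure \ref{fig:free}.

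The second step is to show that any such diagram has an even number of components. My approach is to pass to the quotient $\bar L \subset \mathbb{R}P^3 = S^3/\tau$. A component $\bar C$ of $\bar L$ lifts to one component of $L$ if $[\bar C] \neq 0$ in $H_1(\mathbb{R}P^3;\Z/2) \cong \Z/2$, and to two components otherwise. Thus $L$ has an even number of components if and only if $[\bar L] = 0$ in $H_1(\mathbb{R}P^3;\Z/2)$. The quotient diagram, consisting of one copy of $T$ together with the quotient of the two additional crossings, lives on the projective plane $\mathbb{R}P^2 \subset \mathbb{R}P^3$ coming from the $\tau$-invariant projection sphere. I would verify $[\bar L]=0$ by direct inspection of Figure \ref{fig:free}: tracing each component of $\bar L$ in this quotient and checking that it meets a generating $\mathbb{R}P^1 \subset \mathbb{R}P^2$ in an even number of points. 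The ``in particular'' statement about knots then follows, since a knot has a single, and hence odd, number of components.

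The main obstacle is this last parity check, which requires a careful case analysis of Figure \ref{fig:free} and should follow from the alternating constraint on the two additional crossings together with the $\tau$-equivariant pairing of strands between the two copies of $T$.
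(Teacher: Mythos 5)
Your first step---invoking Theorem \ref{thm:main} and discarding the intravergent, transvergent, and replacement-built diagrams because each of those realizes $\tau$ with nonempty fixed set---is essentially the paper's own argument (the paper phrases it as: the replacements of Figures \ref{fig:transform1} and \ref{fig:transform2} are local and so cannot turn a non-free basic involution into a free one). For the parity step the paper simply reads evenness off Figure \ref{fig:free}; this can be made completely elementary, since the four endpoints of $T$ admit only three internal pairings and in every case the configuration of Figure \ref{fig:free} has exactly two components passing through the tangles, while any closed components inside $T$ occur in $\tau$-swapped pairs. Your homological reformulation---$L$ has an even number of components if and only if $[\bar L]=0$ in $H_1(\mathbb{R}P^3;\Z/2)$, because a quotient component lifts to one or two components according to its class---is correct and is a legitimate alternative route to the same fact.

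The one place your argument would actually break is the verification you propose for $[\bar L]=0$: you say you would trace \emph{each} component of $\bar L$ and check that it meets a generating $\mathbb{R}P^1$ evenly, i.e., that each $[\bar C_i]=0$. That per-component statement is false in general. Take $T$ to be the trivial tangle, so that $L$ is the $(2,2)$-torus link; its quotient has two components, each of which \emph{generates} $H_1(\mathbb{R}P^3;\Z/2)$ (each component of the Hopf link is preserved by the free involution and double covers its image). Only the total class needs to vanish, and it does for a reason independent of $T$: exactly four strands of $L$ meet a $\tau$-invariant $2$-sphere separating the two copies of $T$ (namely the four arcs running through the two extra crossings), so $[\bar L]\cdot[\mathbb{R}P^2]=\tfrac{1}{2}\cdot 4\equiv 0 \pmod 2$. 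With that correction---or with the direct three-case count above---your proof closes; as written, the ``main obstacle'' you flag is real, and the check you describe would fail on the simplest example.
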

\begin{proof}
Observe that the transformations from Figure \ref{fig:transform1} and Figure \ref{fig:transform2} cannot create freely periodic diagrams, since they are only local replacements and the original intravergent or transvergent diagram was not freely periodic. Hence for a freely periodic involution there is an alternating freely periodic diagram by Theorem \ref{thm:main}. Only the half twist on 1 or 2 strands can be alternating, but a freely periodic diagram on 1 strand is not prime (recall that the unknot is not prime), and a freely periodic diagram on 2 strands has an even number of components. See Figure \ref{fig:free}.
\end{proof}
In order to prove Theorem \ref{thm:main} we will need the following theorem of Menasco and Thistlethwaite. We use the language of realized diagrams rather than the original language of flat homeomorphisms since it is necessary for us to keep track of crossing balls. Hence the following theorem is slightly weaker than the original. 

\begin{theorem}\cite[a slight weakening of the Main Theorem]{MT}
\label{thm:mt}
For any reduced alternating diagram $D$ for $L$ and realization $\lambda(D)$, any homeomorphism of pairs $(S^3, L) \cong (S^3, L)$ is isotopic through maps of pairs to a composition of flypes.
\end{theorem}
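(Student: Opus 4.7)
The plan is to apply the main theorem of Menasco and Thistlethwaite \cite{MT}, whose only difference from the statement above is the precise notion of flype used. Their theorem produces, for any homeomorphism of pairs $h: (S^3, L) \to (S^3, L)$ and any reduced alternating diagram $\lambda(D)$ for $L$, an isotopy of $h$ (through maps of pairs) to a composition of flypes in their original, stricter sense.

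To pass from \cite{MT} to the present statement, I would verify that every flype in the sense of \cite{MT} is a flype in the sense of Definition \ref{definition:flype}. Their flype is a homeomorphism of $S^3$ fixing a ball containing $T_2$ and rotating a ball containing $T_1$ by $\pi$, with a linear homotopy in between; this is exactly a standard flype in our sense (see Figure \ref{fig:flype}), and hence is a flype after taking the isomorphisms $g_1, g_2$ in Definition \ref{definition:flype} to be identities. Consequently, any composition of Menasco-Thistlethwaite flypes is automatically a composable collection of flypes in our more general sense, with the same composition, and the conclusion follows immediately. The remark following Definition \ref{definition:flype} already records this point, and the present theorem is in this sense a formal consequence of \cite[Main Theorem]{MT}.

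The main obstacle, addressed in \cite{MT}, is the underlying flyping theorem itself. The strategy there is to exploit Menasco's earlier structural results on incompressible and meridionally incompressible surfaces in alternating link complements to isotope $h$ into standard position with respect to the ``book'' decomposition of $S^3$ determined by $\lambda(D)$, and then to reduce the difference between $h(S^2)$ and $S^2$ by a finite sequence of standard flypes obtained from a careful combinatorial analysis of the intersections of these surfaces with the crossing balls and the projection sphere. Because the present statement only weakens the original by enlarging the class of flypes (so as to streamline bookkeeping on crossing ball identifications in the language of realized diagrams), no additional topology beyond \cite{MT} is required; the full depth of the argument lives in that paper, which I would not attempt to reproduce here.
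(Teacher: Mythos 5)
Your proposal is correct and matches the paper's treatment: the paper offers no independent proof of Theorem \ref{thm:mt}, but simply cites \cite{MT} and notes (in the remark after Definition \ref{definition:flype} and in the sentence preceding the theorem) that its notion of flype strictly generalizes the original, so the statement is a formal weakening of the Main Theorem of \cite{MT}. Your observation that each Menasco--Thistlethwaite flype is a standard flype, hence a flype with $g_1,g_2$ the identity, is exactly the intended justification.
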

In our situation, take the homeomorphism of pairs to be an involution $\tau$, so that $\tau$ is isotopic to $F_{\circ}$ for $F = (f_n, f_{n-1}, \dots, f_1) : \lambda(D) \to \lambda(D)$. In particular, $F_{\circ}^2$ is isotopic to the identity, and we will label the first iteration of $f_i$ as $f_{(i,1)}$ and the second iteration of $f_i$ as $f_{(i,2)}$. With this setup, we give the following definition and lemma.
\begin{definition}
Given a collection of composable flypes $F = (f_n, f_{n-1}, \dots, f_1)$, consider the equivalence relation on these flypes generated by $f_a \sim f_b$ if $[c(f_a)]_F = [d(f_b)]_F$. Then let the \emph{orbit} of $f_i$ be the set of flypes in the equivalence class of $f_i$. A flype $f_j$ is then in the orbit of $f_i$ if there is a sequence of flypes $f_i = f_{i_1},f_{i_2},\dots,f_{i_k} = f_j$ with $[c(f_{i_r})]_F = [d(f_{i_{r+1}})]_F$ for all $r$, or vice versa.
\end{definition}
The following lemma follows from Theorem \ref{thm:mt}, and is similar to \cite[Lemma 3.3]{B} but for involutions instead of odd prime order group actions. 

\begin{lemma}
\label{lemma:orbit}
If $L$ is an alternating prime non-split non-torus link with an involution $\tau$, then there is a reduced alternating diagram $D$ for $L$ and a reduced collection of composable flypes $F = (f_n, \dots, f_1)$ with $f_{\circ} = \tau: \lambda(D) \to \lambda(D)$ such that the orbit of the flype $f_{(i,1)}$ in $F^2 = (f_{(n,2)},\dots,f_{(1,2)},f_{(n,1)},\dots, f_{(1,1)})$ is $\{f_{(i,1)},f_{(i,2)}\}$. 
\end{lemma}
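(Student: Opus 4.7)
The plan is to invoke Theorem \ref{thm:mt} to obtain an initial flype realization of $\tau$, analyze the natural orbit pairing in $F^2$ using reducedness, and then modify the collection so that the pairing takes the form asserted in the lemma.

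First I would apply Theorem \ref{thm:mt} to produce a reduced alternating diagram $D$ for $L$ together with a composable collection of flypes $F_0$ realizing $\tau$ on $\lambda(D)$. Since $L$ is non-torus, \cite[Theorem 10.6.6]{Kaw} rules out $\tau$ being isotopic through maps of pairs to the identity, so $F_0$ genuinely represents $\tau$. I would then reduce $F_0$ via Lemma \ref{lemma:combine}, Lemma \ref{lemma:commute}, and the cancellation case in the proof of Proposition \ref{prop:permute} to obtain a reduced composable collection $F = (f_n, \dots, f_1)$ with $F_\circ = \tau$.

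Next I would analyze the orbit structure of $F^2$. Reducedness of $F$ prohibits $[c(f_i)]_F = [d(f_j)]_F$ for distinct $i, j$ within a single iteration, since such a relation would enable a further reduction via Lemma \ref{lemma:combine} or the cancellation case. Consequently any nontrivial orbit in $F^2$ must link a first-iteration flype with a second-iteration flype. Tracing the crossing ball $c(f_{(i,1)})$ forward through the remaining flypes of the first iteration (where it cannot be destroyed, by reducedness), it emerges as the original crossing $\tau(d(f_i)) \in \lambda(D)$; in the second iteration this crossing is eventually destroyed by a unique $f_{(\pi(i),2)}$, where $\pi$ is a well-defined involution on $\{1, \dots, n\}$ (the involution property is forced by $\tau^2 = \mathrm{id}$, since the analogous trace starting from $c(f_{(\pi(i),1)})$ leads back to $d(f_i)$). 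Hence the orbits in $F^2$ are exactly the pairs $\{f_{(i,1)}, f_{(\pi(i),2)}\}$, each of size two.

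The hard part will be arranging $\pi = \mathrm{id}$. For each pair $\{i, j\}$ with $\pi(i) = j \neq i$, the flypes $f_i$ and $f_j$ are swapped by the $\tau$-symmetry on $\lambda(D)$, and my plan is to replace this pair in $F$ by a $\tau$-symmetric alternative realizing the same contribution to $\tau$ but with $\tau$-invariant destroyed crossings. Performing these replacements iteratively, and re-reducing via Proposition \ref{prop:permute} after each one, should yield a reduced $F$ for which every $d(f_i)$ is fixed by $\tau$, so that $\pi = \mathrm{id}$ and the orbit of each $f_{(i,1)}$ in $F^2$ is precisely $\{f_{(i,1)}, f_{(i,2)}\}$. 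This step is analogous to \cite[Lemma 3.3]{B} for odd prime periods, the additional subtlety being that the $\tau$-action on the collection of flypes can have size-$1$ and size-$2$ orbits simultaneously, and the size-$2$ orbits are exactly what must be eliminated to secure the desired pairing.
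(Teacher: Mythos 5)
Your setup and your analysis of same-iteration pairs match the paper, but the crucial final step --- forcing your pairing $\pi$ to be the identity --- is where the proposal has a genuine gap. You correctly observe that reducedness of $F$ forbids $[c(f_i)]_F=[d(f_j)]_F$ for distinct flypes within one iteration, and hence that nontrivial orbits pair first-iteration with second-iteration flypes via some bijection $\pi$. But your plan for eliminating the pairs with $\pi(i)=j\neq i$ (``replace this pair in $F$ by a $\tau$-symmetric alternative \dots with $\tau$-invariant destroyed crossings'') is not a construction: you do not say what the replacement is, why it realizes the same map, or why reducedness survives it. The paper shows no such surgery is needed, because the same reducedness argument that handles same-iteration pairs also kills cross-iteration pairs once $F^2$ is read cyclically: any $f_j\neq f_{(i,2)}$ lies together with $f_{(i,1)}$ in one of the two length-$n$ cyclic windows $G=(f_{(i-1,2)},\dots,f_{(i,1)})$ or $H=(f_{(i,1)},\dots,f_{(i+1,2)})$. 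Each window is a conjugate of $F$, so it realizes $\tau$ on some realized diagram and has the same complexity as $F$, hence is reduced. If $f_j$ were in the orbit of $f_{(i,1)}$, Proposition \ref{prop:permute} would let you permute that window to make the two flypes adjacent with matching created/destroyed crossings, and Lemma \ref{lemma:combine} (or the cancellation case in the proof of Proposition \ref{prop:permute}) would contradict reducedness. This cyclic-rotation step is the idea your proposal is missing.

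A secondary gap: you assert that the crossing created by $f_{(i,1)}$ is ``eventually destroyed'' in the second iteration, i.e.\ that the orbit has a second element at all, but you justify this only by $\tau^2=\mathrm{id}$ as an isotopy class. What is actually needed is that $F_\circ^2$ equals the identity on the nose, so that it preserves crossing balls and the bookkeeping of equivalence classes closes up. The paper obtains this from the non-torus hypothesis via \cite{nontorus} (a prime non-split alternating non-torus link is hyperbolic) together with Mostow rigidity; this is precisely where the non-torus assumption in the statement enters, and your proposal never uses it for that purpose.
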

\begin{proof}
Let $F = (f_n,f_{n-1},\dots,f_1)$ be a reduced collection of composable flypes with $F_{\circ}$ isotopic to $\tau$. Such a collection exists by Theorem \ref{thm:mt}.  

Suppose that in the orbit of $f_{(i,1)}$ in $F^2$ we have a flype $f_j \neq f_{(i,2)}$. Then $f_{(i,1)}$ and $f_j$ are both contained in either $G = (f_{(i-1,2)},\dots,f_{(i,1)})$ or $H = (f_{(i,1)},\dots,f_{(i+1,2)})$, since $f_j \neq f_{(i,2)}$. Suppose without loss of generality that $f_j \in G$, and note that $G_{\circ}$ is isotopic to $\tau$, although on a different diagram than $F$. Then cx$(F) = $ cx$(G)$, so $G$ is also reduced. Now apply Proposition \ref{prop:permute} to $G$ to obtain a new reduced collection $G'$ isotopic to $\tau$ with $f'_{(i,1)}$ and $f'_j$ adjacent and $c(f'_{(i,1)}) = d(f'_j)$. This is a contradiction since Lemma \ref{lemma:combine} then implies that $G'$ is not reduced. Thus there is no $f_j \neq f_{(i,2)}$ in the orbit of $f_{(i,1)}$. 

We will also show that there must be at least one other element in the orbit of $f_{(i,1)}$ so that the orbit is exactly $\{f_{(i,1)},f_{(i,2)}\}$. First, observe that by \cite{nontorus} an alternating prime non-split non-torus link is hyperbolic. Then by Mostow rigidity, any finite order map on $(S^3 - L)$ which is isotopic to the identity is the identity. In particular, $F^2_{\circ} = $ id. The identity map preserves crossing balls, however, so there must be another flype $f_j$ in $F^2$ with $[d(f_j)] = [c(f_{(i,1)})]$. 

\end{proof}

Note that while Lemma \ref{lemma:orbit} guarantees that $f_n \circ \dots \circ f_1$ is reduced, $(f_n \circ \dots \circ f_1)^2$ need not be. 

We are now ready to prove the main theorem. 

\begin{proof}[Proof of Theorem \ref{thm:main}]
Let $F = (f_n, \dots, f_1): \lambda(D) \to \lambda(D)$ be a collection of composable flypes satisfying Lemma \ref{lemma:orbit}. We will proceed by induction on $n$. The base case is $n=0$, in which case $F_{\circ}$ is intravergent, transvergent, or freely periodic by Lemma \ref{lemma:floop}. For the inductive step we will show that an innermost flype $f_i$ is of type A or type B. Then a modification of $(L,\tau)$ removing the tangle $T$ (the reverse of one of the moves shown in Figure \ref{fig:transform1} and Figure \ref{fig:transform2}) will produce a new link diagram $D'$ (for a new link $L'$) with a new involution $\tau'$. Because $f_i$ is innermost, the $[c(f_j)]$ and $[d(f_j)]$ are not removed by the modification if $i \neq j$. In particular, $f_j$ induces a flype $\overline{f_j}$ on the modified link $L'$ so that we have a new collection of composable flypes $\overline{F} = (\overline{f_n}, \dots, \widehat{f_i}, \dots, \overline{f_1}) \cong \tau'$ with $n-1$ flypes which is also reduced. Here $\widehat{f_i}$ refers to removing $f_i$ from the collection. 

We now turn to showing that an innermost flype is of type A or type B. Consider an innermost flype in $F$, which exists by Lemma \ref{lemma:innermost}. By a cyclic permutation of the $f_i$ we may assume without loss of generality that $f_1$ is innermost. Let $g = f_n \circ \dots \circ f_2$ so that $F_{\circ} = g \circ f_1$. Then consider the realized diagram for the tangle $T$ in the domain of $f_1$. Because the orbit of $f_1$ in $F_{\circ}^2$ is $\{f_{1,1},f_{1,2}\}$, we see that $g$ fixes $T$ set-wise; since $f_1$ is innermost, $g$ further fixes $T$ diagrammatically. That is, $g$ restricts to an isomorphism of realized diagrams on $T$. By Lemma \ref{lemma:floop} we then have that $g$ acts on $T$ as a transvergent or intravergent involution. Thus $f_1$ is either type A or type B as desired.
\end{proof}

\bibliography{bibliographyA}
\bibliographystyle{amsplain}

\end{document}